\documentclass[11pt]{amsart}
\usepackage[usenames]{color}
\usepackage{mathrsfs}
\usepackage[hidelinks]{hyperref}
\usepackage{amssymb,amsmath,amsfonts,latexsym,amsthm}
\usepackage{graphicx}
\usepackage{float}	

\usepackage{orcidlink}

\usepackage{tikz}
\usetikzlibrary{calc}

\calclayout

\newcommand{\seqnum}[1]{\href{http://oeis.org/#1}{\underline{#1}}}

\newcount\tilpos

\newcommand{\Tiling}[2][4]{
\begin{tikzpicture}[x=0.7cm,y=0.7cm,baseline={(0,0.5)}, line join=round]
  \def\N{#1}

  \tikzset{
    board/.style={draw, line width=1.0pt},
    Astyle/.style={draw, line width=0.9pt}, 
    Bstyle/.style={draw, line width=0.9pt},  
    Cstyle/.style={draw, line width=0.9pt},   
    lab/.style={font=\small}
  }

  \def\LenA{1}\def\LenB{1}\def\LenC{2}

  \def\TileA##1{%
    \draw[Astyle] (##1,0) rectangle ++(1,1);
    \node[lab] at ({##1+0.5},0.5) {$at$};
  }
  \def\TileB##1{%
    \draw[Bstyle] (##1,0) rectangle ++(1,1);
    \node[lab] at ({##1+0.5},0.5) {$b$};
  }
  \def\TileC##1{%
    \draw[Cstyle] (##1,0) rectangle ++(2,1);
    \node[lab] at ({##1+1.0},0.5) {$c$};
  }

  \tilpos=0
  \foreach \t in {#2} {%
    \edef\x{\number\tilpos}%
    \csname Tile\t\endcsname{\x}%
    \global\advance\tilpos by \csname Len\t\endcsname\relax
  }

  \draw[board] (0,0) rectangle (\N,1);
\end{tikzpicture}%
}

\newcommand{\SetAColor}[2]{%
  \expandafter\def\csname Acolor#1\endcsname{#2}%
}

\newcount\tilpos

\newcommand{\TilinC}[2][4]{
\begin{tikzpicture}[x=0.7cm,y=0.7cm,baseline={(0,0.5)}, line join=round]
  \def\N{#1}

  \tikzset{
    board/.style={draw, line width=1.0pt},
    Bstyle/.style={draw, line width=0.9pt, fill=white},
    Cstyle/.style={draw, line width=0.9pt, fill=white},
    lab/.style={font=\small}
  }

  \def\LenA{1}\def\LenB{1}\def\LenC{2}

  \def\sp{ }%
  \def\Trimmed{}%
  \def\TrimLead##1{\expandafter\TrimLeadAux##1\relax}%
  \def\TrimLeadAux##1##2\relax{%
    \ifx##1\sp
      \expandafter\TrimLeadAux##2\relax
    \else
      \def\Trimmed{##1##2}%
    \fi
  }%

  \def\SplitFirst##1##2\relax{\def\TileType{##1}\def\TileRest{##2}}%
  \def\LBrack{[}%
  \def\GetFirst##1##2\relax{\def\RestFirst{##1}\def\RestTail{##2}}%
  \def\StripBrackets[##1]\relax{\def\Aindex{##1}}%

  \def\TA{A}\def\TB{B}\def\TC{C}

  \tilpos=0
  \foreach \rawtile in {#2} {%
    \TrimLead{\rawtile}%
    \edef\x{\number\tilpos}%
    \expandafter\SplitFirst\Trimmed\relax

    \ifx\TileType\TA
      \def\Aindex{1}%
      \ifx\TileRest\empty\else
        \expandafter\GetFirst\TileRest\relax
        \ifx\RestFirst\LBrack
          \expandafter\StripBrackets\TileRest\relax
        \else
          \def\Aindex{\TileRest}%
        \fi
      \fi
      \ifcsname Acolor\Aindex\endcsname
        \edef\Afill{\csname Acolor\Aindex\endcsname}%
      \else
        \def\Afill{gray!20}%
      \fi
      \draw[draw, line width=0.9pt, fill=\Afill] (\x,0) rectangle ++(1,1);
      \node[lab] at ({\x+0.5},0.5) {$a$};

    \else\ifx\TileType\TB
      \draw[Bstyle] (\x,0) rectangle ++(1,1);
      \node[lab] at ({\x+0.5},0.5) {$b$};

    \else\ifx\TileType\TC
      \draw[Cstyle] (\x,0) rectangle ++(2,1);
      \node[lab] at ({\x+1.0},0.5) {$c$};
    \fi\fi\fi

    \global\advance\tilpos by \csname Len\TileType\endcsname\relax
  }

  \draw[board] (0,0) rectangle (\N,1);
\end{tikzpicture}%
}

\SetAColor{1}{green!20}
\SetAColor{2}{yellow!20}
\SetAColor{3}{orange!25}

\makeatother

\newcommand{\N}{{\mathbb N}}

\newcommand{\wt}{{\texttt{wt}}}

\newcommand{\Ft}[1]{\mathscr{F}_{#1}}
\newcommand{\Lt}[1]{\mathscr{L}_{#1}}

\theoremstyle{plain}
\numberwithin{equation}{section}
\newtheorem{thm}{Theorem}[section]
\newtheorem{corollary}{Corollary}[section]
\newtheorem{theorem}[thm]{Theorem}

\newtheorem{proposition}[thm]{Proposition}

\newtheorem{remark}{Remark}

\title[Holonomic Sequences from Laplace Transforms of  Second Order Sequences]{Holonomic Sequences Arising from Laplace Transforms of Generalized Second Order Sequences}

\keywords{Fibonacci polynomial, Laplace transform, holonomic sequence, continued fraction, generating function.} 

\date{\today}
\subjclass[2020]{05A15, 11B39, 33B10, 11A55.}

\author[R. Fl\'orez]{Rigoberto Fl\'orez \orcidlink{0000-0002-3644-9358}}
\address{Department of Mathematics and Data Analytics\\
		The Citadel\\
		Charleston, SC \\
		U.S.A.}
  \email{rflorez1@citadel.edu}
\author[R.  Higuita-Diaz]{Robinson Higuita-Diaz \orcidlink{0000-0003-3305-0381}}
 \address{Escuela de Matem\'aticas y Estad\'istica, Universidad Pedag\'ogica y Tecnol\'ogica de Colombia, Tunja, Colombia.}
\email{ robinson.higuita@uptc.edu.co}
\author[J. L. Ram\'{\i}rez]{Jos\'e L. Ram\'{\i}rez \orcidlink{0000-0002-8028-9312}}

\address{Departamento de Matem\'aticas,  Universidad Nacional de Colombia,  Bogot\'a, Colombia.}
\email{jlramirezr@unal.edu.co}

\begin{document}

\begin{abstract}
We study the FiboLaplace and LucasLaplace sequences obtained by applying
the Laplace transform to generalized Fibonacci-type and Lucas-type
polynomials. We relate these families and show that, for each fixed
transform parameter, the FiboLaplace sequence is holonomic. We give a
combinatorial interpretation in terms of weighted colored tilings,
leading to recurrence relations, identities, and a triangular
refinement by the number of dominoes. We also derive two continued-fraction expansions and  obtain a second combinatorial interpretation in terms of 
generalized Motzkin paths. These results extend earlier work of Givens and Moll.
\end{abstract}

\maketitle
\begin{center}
\emph{Dedicated to Professor Víctor H. Moll.\footnote{In recognition of
his forty years at Tulane University and in deep appreciation of his
invaluable support of Colombian mathematics and the many opportunities
he has created for Colombian students.}}
\end{center}

\section{Introduction}

Sequences defined by second-order recurrences arise naturally in number theory and combinatorics. Among the best-known examples are the Fibonacci and Lucas sequences and their polynomial analogues. In this paper, we consider the following generalized Fibonacci-type and Lucas-type polynomial sequences.

Let $a,b,c\in\mathbb{Q}$ with $a\neq 0$ and $c\neq 0$. The \emph{generalized Fibonacci-type polynomial sequence}
$\bigl(\Ft{n}(t)\bigr)_{n\geq 0}$ is defined by $\Ft{0}(t)=0$, $\Ft{1}(t)=1$, and
\begin{equation}\label{Fibonacci;general:FT}
\Ft{n}(t)=(at+b)\Ft{n-1}(t)+c\,\Ft{n-2}(t), \qquad n\geq 2.
\end{equation}
The corresponding \emph{generalized Lucas-type polynomial sequence}
$\bigl(\Lt{n}(t)\bigr)_{n\geq 0}$ is defined by
$\Lt{0}(t)=2$, $\Lt{1}(t)=at+b$, and
\begin{equation}\label{Fibonacci;general:LT}
\Lt{n}(t)
=
(at+b)\Lt{n-1}(t)+c\,\Lt{n-2}(t),
\qquad n\geq 2.
\end{equation}
More general initial conditions may be used for Lucas-type polynomials; see, for example,
\cite{florezHiguitaMuk2018}. We use the normalization above throughout.

By choosing suitable values of $a$, $b$, and $c$, one recovers several classical polynomial families. For example, the choice $a=1$, $b=0$, and $c=1$ gives the classical Fibonacci and Lucas polynomial sequences. Evaluating these polynomials at $t=1$ yields the corresponding numerical sequences, namely the Fibonacci numbers $F_n$ and the Lucas numbers $L_n$, respectively. Table~\ref{familiarfibonacci} gives specific instances of these choices. Further properties of these polynomial sequences may be found in \cite{AmdeberhanChenMollSagan, FlorezJC, florezHiguitaMuk2018} and the references therein.

\begin{table} [!ht]
\begin{center}\scalebox{0.8}{
\begin{tabular}{|l|c|c|l|} \hline
  Polynomial            &   $n=0$    &  $n=1$	& Recurrence for $n\geq 2$ 						       \\	
    			  \hline   \hline
  Fibonacci             	 & $0$	    &$1$      	&$F_{n}(t) = t F_{n - 1}(t) + F_{n - 2}(t)$	 	       \\
  Lucas 	             	 &$2$	    & $t$ 	 	&$L_n(t)= t L_{n - 1}(t) + L_{n - 2}(t)$                \\ 						
  Pell			    		 &$0$	    & $1$       &$P_n(t)= 2t P_{n - 1}(t) + P_{n - 2}(t)$               \\
  Pell-Lucas 	    		 &$2$	    &$2t$       &$Q_n(t)= 2t Q_{n - 1}(t) + Q_{n - 2}(t)$               \\
  Fermat  	                 &$0$	    & $1$      	&$\Phi_n(t)= 3t\Phi_{n-1}(t)-2\Phi_{n-2}(t) $           \\
  Fermat-Lucas	             &$2$	    &$3t$  		&$\vartheta_n(t)=3t\vartheta_{n-1}(t)-2\vartheta_{n-2}(t)$\\
  Morgan-Voyce	(first type)              &$0$		&$1$      	&$B_n(t)= (t+2) B_{n-1}(t)-B_{n-2}(t) $  	 	         \\
  Morgan-Voyce (second type) 	             &$2$		&$t+2$      &$C_n(t)= (t+2) C_{n-1}(t)-C_{n-2}(t)$  	 	         \\
  Vieta 		             &$0$ 	   	&$1$	    &$V_n(t)=t V_{n-1}(t)-V_{n-2}(t)$ 	    \\
  Vieta-Lucas 		         &$2$ 	   	&$t$	    &$v_n(t)=t v_{n-1}(t)-v_{n-2}(t)$      \\
    \hline
\end{tabular}}
\end{center}
\caption{Some classical Fibonacci-type and Lucas-type polynomial families.}\label{familiarfibonacci}
\end{table}

The work of Givens and Moll~\cite{Givens} provides the starting point for the present paper. They studied arithmetic properties of the sequence \[ e_n=\int_0^\infty e^{-t}F_n(t)\,dt, \] where $F_n(t)$ denotes the $n$th Fibonacci polynomial, with particular emphasis on its $p$-adic valuation. The sequence $(e_n)_{n\geq 1}$ is listed in the OEIS as \cite[\seqnum{A362787}]{sloane}. This integral can be viewed as a special value of the Laplace transform. Indeed, for a polynomial $f(t)$ and $s>0$, the Laplace transform is given by 
\[ \mathcal{L}\{f(t)\}(s) = \int_0^\infty e^{-st}f(t)\,dt. \] Thus, the sequence studied by Givens and Moll is given by $e_n=\mathcal{L}\{F_n(t)\}(1)$.

Motivated by this point of view, we extend the construction of Givens and Moll to the generalized Fibonacci-type and Lucas-type polynomial families defined above. Since $\Ft{n}(t)$ and $\Lt{n}(t)$ are polynomials in $t$, their Laplace transforms are well defined for every $s>0$. We therefore define, for each $s>0$, the \emph{FiboLaplace sequence} $\bigl(e_n(s)\bigr)_{n\geq 0}$ by

\[
e_n(s):=\mathcal{L}\{\Ft{n}(t)\}(s)
=\int_0^\infty e^{-st}\Ft{n}(t)\,dt,
\]
and the \emph{LucasLaplace sequence} $\bigl(l_n(s)\bigr)_{n\geq 0}$ by
\[
l_n(s):=\mathcal{L}\{\Lt{n}(t)\}(s)
=\int_0^\infty e^{-st}\Lt{n}(t)\,dt.
\]

For each fixed $s>0$, we prove that the sequence $(e_n(s))_{n\geq 0}$ is holonomic; that is, it satisfies a linear recurrence whose coefficients are polynomials in $n$. We also establish relations between the FiboLaplace and LucasLaplace sequences, derive a first-order differential equation for the ordinary generating function of $(e_n(s))_{n\geq 0}$, and obtain formulas for the derivatives of $e_n(s)$ with respect to $s$. 

We then give a combinatorial interpretation of the FiboLaplace sequence. Although the model extends to arbitrary values of $s$, we focus on the case $s=1$, where the weights take a particularly simple form. This model yields explicit formulas, recurrence relations, and a continued-fraction expansion for the ordinary generating function. A second continued-fraction representation leads to an interpretation in terms of generalized Motzkin paths.

The paper is organized as follows. Section~2 develops recurrence and
generating-function properties of the FiboLaplace and LucasLaplace
sequences. Section~3 gives a combinatorial interpretation of the
FiboLaplace sequence in terms of weighted colored tilings, together
with explicit formulas, identities, and a triangular refinement by the
number of dominoes. Section~4 derives two continued-fraction expansions and 
gives an interpretation in terms of weighted generalized Motzkin paths.

\section{Some Properties of the FiboLaplace and LucasLaplace Sequences}

In this section, we establish several basic properties of the FiboLaplace and LucasLaplace sequences. When no confusion can arise, we suppress the dependence on $s$ and write $e_n$ and $l_n$ instead of $e_n(s)$ and $l_n(s)$, respectively. For each fixed $n\geq 0$, the quantities $e_n(s)$ and $l_n(s)$ are rational functions of $s$. As an illustration of the construction, Table~\ref{tab:example-FiboLaplace} lists the first few Fibonacci polynomials $F_n(t)$ and their Laplace transforms.

\begin{table}[ht!]
\centering
\renewcommand{\arraystretch}{1.25}
\setlength{\tabcolsep}{6pt}
\begin{tabular}{c|c|c}
\hline
$n$ & $F_n(t)$ & $e_n(s)=\mathcal{L}\{F_n(t)\}(s)$ \\
\hline\hline
0 & $0$ & $0$ \\
1 & $1$ & $\frac{1}{s}$ \\
2 & $t$ & $\frac{1}{s^{2}}$ \\
3 & $t^{2}+1$ & $\frac{s^{2}+2}{s^{3}}$ \\
4 & $t^{3}+2t$ & $\frac{2(s^{2}+3)}{s^{4}}$ \\
5 & $t^{4}+3t^{2}+1$ & $\frac{s^{4}+6s^{2}+24}{s^{5}}$ \\
6 & $t^{5}+4t^{3}+3t$ & $\frac{3(s^{4}+8s^{2}+40)}{s^{6}}$ \\
7 & $t^{6}+5t^{4}+6t^{2}+1$ & $\frac{s^{6}+12s^{4}+120s^{2}+720}{s^{7}}$ \\
\hline
\end{tabular}
\caption{The first Fibonacci polynomials $F_n(t)$ and their Laplace transforms $e_n(s)$.}
\label{tab:example-FiboLaplace}
\end{table}

We will use the following identities relating the Fibonacci-type and Lucas-type polynomial sequences:
\begin{align}
\Lt{n}(t)
&=
c\,\Ft{n-1}(t)+\Ft{n+1}(t),
&& n\geq 1,
\label{IdentidadLucaFibonacci}
\\
\Lt{n}'(t)
&=
na\,\Ft{n}(t),
&& n\geq 0.
\label{IdentidadRelaciónDerivadas}
\end{align}
The first identity appears in
\cite[Identity~2]{FlorezMcAnallyMuk}, and the second in
\cite{florezHiguitaRamirez}.

The next theorem uses these identities to relate the FiboLaplace and LucasLaplace sequences directly. In particular, it expresses $l_n(s)$ in terms of $e_n(s)$.

\begin{theorem}\label{Relacion:e_nyl_n}
For every $n\ge 0$
\[
s\,l_n(s)=na\,e_n(s)+\Lt{n}(0).
\]
\end{theorem}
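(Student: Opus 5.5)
The plan is to use the standard Laplace transform rule for derivatives, $\mathcal{L}\{f'(t)\}(s)=s\,\mathcal{L}\{f(t)\}(s)-f(0)$, and apply it with $f=\Lt{n}$. Since $\Lt{n}(t)$ is a polynomial, it is smooth and of polynomial growth. So for every $s>0$ we may integrate by parts on $[0,\infty)$:
\[
\int_0^\infty e^{-st}\Lt{n}'(t)\,dt=\Bigl[e^{-st}\Lt{n}(t)\Bigr]_0^\infty+s\int_0^\infty e^{-st}\Lt{n}(t)\,dt=-\Lt{n}(0)+s\,l_n(s).
\]
The boundary term at infinity vanishes because $e^{-st}p(t)\to 0$ for any polynomial $p$ when $s>0$. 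This is the only analytic point to check, and it is routine.

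Next I will evaluate the left-hand side using identity \eqref{IdentidadRelaciónDerivadas}, $\Lt{n}'(t)=na\,\Ft{n}(t)$, valid for all $n\ge 0$. By linearity of the integral,
\[
\int_0^\infty e^{-st}\Lt{n}'(t)\,dt=na\int_0^\infty e^{-st}\Ft{n}(t)\,dt=na\,e_n(s).
\]
Equating the two expressions and rearranging gives $s\,l_n(s)=na\,e_n(s)+\Lt{n}(0)$.

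As a sanity check I will verify the small cases directly. For $n=0$, $\Lt{0}=2$, so $l_0=2/s$ and the identity reads $2=0+2$. For $n=1$, $l_1=a/s^2+b/s$, so $s\,l_1=a/s+b=a\,e_1+\Lt{1}(0)$. Both agree.

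I expect no real obstacle. The only step needing care is justifying the vanishing boundary term and the convergence of the integrals, and both follow since the integrands are polynomials times $e^{-st}$ with $s>0$. The substantive input is identity \eqref{IdentidadRelaciónDerivadas}, which the paper cites from earlier work and which we are allowed to assume.
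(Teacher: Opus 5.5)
Your proposal is correct and matches the paper's proof: you Laplace-transform the identity $\Lt{n}'(t)=na\,\Ft{n}(t)$ and equate the result with the derivative rule $\mathcal{L}\{f'\}(s)=s\,\mathcal{L}\{f\}(s)-f(0)$, which comes from integration by parts. Your justification that the boundary term vanishes, together with the checks at $n=0,1$, adds care that the paper only states briefly.
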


\begin{proof}
Applying the Laplace transform to
\eqref{IdentidadRelaciónDerivadas} gives
\[
\mathcal{L}\{\Lt{n}'(t)\}(s)
=
na\,\mathcal{L}\{\Ft{n}(t)\}(s)
=
na\,e_n(s).
\]
On the other hand,  the Laplace transform satisfies 
\[\mathcal{L}\{f'(t)\}(s)=s\,\mathcal{L}\{f(t)\}(s)-f(0),\]
which follows from integration by parts. Applying this equality  with $f(t)=\Lt{n}(t)$ yields 
\[
\mathcal{L}\{\Lt{n}'(t)\}(s)=s\,\mathcal{L}\{\Lt{n}(t)\}(s)-\Lt{n}(0)=s\,l_n(s)-\Lt{n}(0).
\]
Equating these two expressions gives
\[
s\,l_n(s)-\Lt{n}(0)=na\,e_n(s),
\]
which proves the result.
\end{proof}

The next result gives recurrence relations for the FiboLaplace and LucasLaplace sequences.

\begin{theorem}\label{en-recursiva}
Let $a,b,c\in\mathbb{Q}$ with $a\neq 0$ and $c\neq 0$.
\begin{enumerate}
\item\label{en-recursivaPart1}
For $n\geq 1$, the FiboLaplace sequence satisfies
\[
s\,e_{n+1}(s)=na\,e_n(s)-c\,s\,e_{n-1}(s)+\Lt{n}(0),
\]
with initial values $e_0(s)=0$ and $e_1(s)=1/s$.

\item\label{en-recursivaPart2}
For $n\geq 2$, the LucasLaplace sequence satisfies
\[
s(n-1)l_{n+1}(s)
= a(n^{2}-1)l_n(s)
-(n+1)c\bigl(sl_{n-1}(s)-\Lt{n-1}(0)\bigr)
+(n-1)\Lt{n+1}(0),
\]
with initial values $$l_0(s)=2/s, \quad l_1(s)=(a+bs)/s^2,\; \text{ and } \; l_2(s)=1/s^3((b^2+2c)s^2+2ab\,s+2a^2).$$
\end{enumerate}
\end{theorem}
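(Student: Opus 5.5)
For part (1), I will start from identity \eqref{IdentidadLucaFibonacci} in the form $\Ft{n+1}(t)=\Lt{n}(t)-c\,\Ft{n-1}(t)$ for $n\ge 1$, apply the Laplace transform, and multiply by $s$. This gives
\[
s\,e_{n+1}=s\,l_n-c\,s\,e_{n-1}.
\]
Substituting $s\,l_n=na\,e_n+\Lt{n}(0)$ from Theorem~\ref{Relacion:e_nyl_n} produces the recurrence
\[
s\,e_{n+1}=na\,e_n-c\,s\,e_{n-1}+\Lt{n}(0).
\]
The initial values are immediate: $e_0=\mathcal{L}\{0\}=0$ and $e_1=\mathcal{L}\{1\}(s)=1/s$.

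For part (2), I will eliminate the $e$'s in favour of the $l$'s. By Theorem~\ref{Relacion:e_nyl_n}, for $m\ge1$ we have
\[
e_m=\frac{s\,l_m-\Lt{m}(0)}{ma}.
\]
I will substitute this for $m=n+1,n,n-1$ into part (1); this requires $n-1\ge1$, which is why $n\ge 2$. Multiplying through by $a(n+1)(n-1)$ should give
\[
(n-1)s\bigl(s\,l_{n+1}-\Lt{n+1}(0)\bigr)
=(n^2-1)a\,\bigl(s\,l_n-\Lt{n}(0)\bigr)-(n+1)c\,s\bigl(s\,l_{n-1}-\Lt{n-1}(0)\bigr)+a(n^2-1)\Lt{n}(0).
\]
The two $\Lt{n}(0)$ terms on the right cancel. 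Every remaining term carries a factor $s$, so I divide by $s$. Moving $-(n-1)\Lt{n+1}(0)$ to the right-hand side then yields exactly the stated identity.

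The initial values come from transforming $\Lt{0}=2$, $\Lt{1}=at+b$ and $\Lt{2}=(at+b)^2+2c$ using $\mathcal{L}\{t^k\}(s)=k!/s^{k+1}$.

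The only delicate step is the bookkeeping of the constant terms $\Lt{m}(0)$ and the factors $na$, $(n\pm1)a$ in the elimination. I will check this step carefully, and confirm the result with a small case such as $n=2$ against the explicit initial values.
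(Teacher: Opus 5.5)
Your proposal is correct and follows essentially the paper's argument. Part (1) is the paper's derivation verbatim, and in part (2) your displayed identity, the cancellation of the $\Lt{n}(0)$ terms, the division by $s$, and the initial values all check out; the only difference is that the paper substitutes $e_{n\pm1}=(s\,l_{n\pm1}-\Lt{n\pm1}(0))/((n\pm1)a)$ directly into the transformed identity $l_n=c\,e_{n-1}+e_{n+1}$, so no cancellation or division by $s$ is needed, whereas your route through part (1) simply undoes Theorem~\ref{Relacion:e_nyl_n} at index $n$.
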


\begin{proof}
Applying the Laplace transform to
\eqref{IdentidadLucaFibonacci} gives
\begin{equation}\label{eq:Laplace-LF2}
l_n(s)=c\,e_{n-1}(s)+e_{n+1}(s), \qquad n\geq 1.
\end{equation}
Multiplying \eqref{eq:Laplace-LF2} by $s$ and using Theorem~\ref{Relacion:e_nyl_n}, we obtain
\[
na\,e_n(s)+\Lt{n}(0)=c\,s\,e_{n-1}(s)+s\,e_{n+1}(s),
\]
which proves the first part.

For the second part, Theorem~\ref{Relacion:e_nyl_n} gives
\begin{equation*}
e_k(s)=\frac{s}{ka}\,l_k(s)-\frac{\Lt{k}(0)}{ka}, \qquad k\geq 1.
\end{equation*}
Substituting this expression with $k=n-1$ and $k=n+1$ into \eqref{eq:Laplace-LF2}, and multiplying by $a(n^2-1)$, gives
\[
a(n^2-1)\,l_n(s)
= s\Bigl((n-1)l_{n+1}(s)+(n+1)c\,l_{n-1}(s)\Bigr)
-\Bigl((n-1)\Lt{n+1}(0)+(n+1)c\,\Lt{n-1}(0)\Bigr).
\]
Solving for $l_{n+1}(s)$ gives the stated recurrence.  The stated initial values follow directly from the definitions.
\end{proof}

The next result eliminates the dependence on the term $\Lt{n}(0)$ appearing in Theorem~\ref{en-recursiva}. As a consequence, we obtain a homogeneous recurrence relation for the FiboLaplace sequence. However, the resulting recurrence has order four, rather than order two.

\begin{theorem}\label{thm:en-4term}
For $n\ge 3$,
\[
s\,e_{n+1}(s)
=(bs+na)\,e_n(s)
-b(n-1)a\,e_{n-1}(s)
+\bigl(bcs-c(n-2)a\bigr)e_{n-2}(s)
+c^{2}s\,e_{n-3}(s),
\]
with the initial values
\[
e_0(s)=0,\qquad e_1(s)=\frac{1}{s},\qquad e_2(s)=\frac{a+bs}{s^2},\qquad
e_3(s)=\frac{(b^2+c)s^2+2ab\,s+2a^2}{s^3}.
\]
\end{theorem}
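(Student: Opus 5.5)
The plan is to eliminate the inhomogeneous term $\Lt{n}(0)$ from part~\eqref{en-recursivaPart1} of Theorem~\ref{en-recursiva}. The key observation is that the constants $\Lt{n}(0)$ themselves satisfy a homogeneous second-order recurrence. Setting $t=0$ in \eqref{Fibonacci;general:LT} gives
\[
\Lt{n}(0)=b\,\Lt{n-1}(0)+c\,\Lt{n-2}(0)\qquad (n\ge 2).
\]

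For $n\ge 1$ define
\[
R_n:=s\,e_{n+1}(s)-na\,e_n(s)+c\,s\,e_{n-1}(s).
\]
By Theorem~\ref{en-recursiva}\eqref{en-recursivaPart1} we have $R_n=\Lt{n}(0)$ for every $n\ge 1$. For $n\ge 3$, the indices $n$, $n-1$, $n-2$ are all at least $1$, so $R_n$, $R_{n-1}$ and $R_{n-2}$ are all defined and equal to the corresponding Lucas values. The Lucas recurrence at $t=0$ then gives
\[
R_n-b\,R_{n-1}-c\,R_{n-2}=0 .
\]
This restriction on the range of indices is exactly why the statement requires $n\ge 3$.

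Expanding this identity, the $e_{n-1}$ terms contributed by $R_n$ and $c\,R_{n-2}$ are $+cs$ and $-cs$, so they cancel. Only $b(n-1)a\,e_{n-1}$ survives. The remaining coefficients collect as follows:
\begin{itemize}
\item $e_n$: $-(na+bs)$;
\item $e_{n-2}$: $-bcs+c(n-2)a$;
\item $e_{n-3}$: $-c^2 s$.
\end{itemize}
Moving everything except $s\,e_{n+1}$ to the right-hand side gives precisely the stated four-term recurrence. This bookkeeping is the only place an error could slip in, so I would check it carefully term by term; otherwise the argument is routine.

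For the initial values, I would use $\mathcal{L}\{t^k\}(s)=k!/s^{k+1}$ together with the explicit polynomials $\Ft{2}(t)=at+b$ and $\Ft{3}(t)=(at+b)^2+c=a^2t^2+2abt+b^2+c$. These give
\[
e_2(s)=\frac{a}{s^2}+\frac{b}{s}
\qquad\text{and}\qquad
e_3(s)=\frac{2a^2}{s^3}+\frac{2ab}{s^2}+\frac{b^2+c}{s},
\]
which agree with the stated formulas. The values $e_0(s)=0$ and $e_1(s)=1/s$ are immediate.
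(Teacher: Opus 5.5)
Your proposal is correct and follows essentially the same route as the paper. Both arguments set $R_n=s\,e_{n+1}(s)-na\,e_n(s)+cs\,e_{n-1}(s)=\Lt{n}(0)$, use the Lucas recurrence at $t=0$ to obtain $R_n=bR_{n-1}+cR_{n-2}$ for $n\ge 3$, and collect coefficients; your coefficient bookkeeping and your explicit check of $e_2(s)$ and $e_3(s)$, which the paper leaves implicit, are both correct.
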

\begin{proof}
By Part~\ref{en-recursivaPart1} of Theorem~\ref{en-recursiva}, 
\begin{equation}\label{eq:defAn}
s\,e_{n+1}(s)-na\,e_n(s)+c\,s\,e_{n-1}(s)=\Lt{n}(0), \qquad n\geq 1.
\end{equation}
Define $A_n(s):=s e_{n+1}(s)-na\,e_n(s)+cs\,e_{n-1}(s)$. Then \eqref{eq:defAn} gives $A_n(s)=\Lt{n}(0)$. Evaluating the Lucas-type recurrence \eqref{Fibonacci;general:LT} at $t=0$ gives, 
\[
\Lt{n}(0)=b\,\Lt{n-1}(0)+c\,\Lt{n-2}(0), \qquad n\geq 2.
\]
It follows that
\[
A_n(s)=b\,A_{n-1}(s)+c\,A_{n-2}(s)\qquad n\geq 3.
\]
Substituting the definitions of $A_n(s)$, $A_{n-1}(s)$, and $A_{n-2}(s)$ and collecting terms gives the stated recurrence.
\end{proof}

For each fixed $s>0$, Theorem~\ref{thm:en-4term} shows that
$\bigl(e_n(s)\bigr)_{n\geq 0}$ is P-recursive, or equivalently holonomic,
since it satisfies a linear recurrence with coefficients polynomial in
$n$. Consequently, its ordinary generating function is D-finite, that is  it
satisfies a linear differential equation with polynomial coefficients.
See, for example, \cite{Kauers}.

We now turn to the ordinary generating function of the FiboLaplace sequence. Define
\[
E(t;s):=\sum_{n\geq 0}e_n(s)t^n.
\]

\begin{theorem}\label{thm:ODE-E}
The generating function $E(t;s)$ satisfies the first-order linear differential equation
\[
a t^2 E'(t;s)-s(1+c t^2)E(t;s)
+\frac{t(1+c t^2)}{1-bt-ct^2}=0,
\]
with initial condition $E(0;s)=0$.
\end{theorem}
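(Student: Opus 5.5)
The plan is to multiply the first-order recurrence of Theorem~\ref{en-recursiva}, Part~\ref{en-recursivaPart1}, by $t^{n+1}$ and sum over $n\ge 1$. This turns it into a functional equation for $E(t;s)$. The forcing term $\Lt{n}(0)$ is handled through its own generating function: setting $t=0$ in \eqref{Fibonacci;general:LT} gives $\Lt{0}(0)=2$, $\Lt{1}(0)=b$, and $\Lt{n}(0)=b\Lt{n-1}(0)+c\Lt{n-2}(0)$. The standard computation then gives
\[
\sum_{n\ge 0}\Lt{n}(0)x^n=\frac{2-bx}{1-bx-cx^2}.
\]

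Next, I translate each term of the recurrence. Using $e_0=0$ and $e_1=1/s$:
\begin{itemize}
\item $\sum_{n\ge1} s\,e_{n+1}t^{n+1}=s\bigl(E-e_1t\bigr)=sE-t$;
\item $\sum_{n\ge1} na\,e_nt^{n+1}=a t^2\sum_{n\ge 1} n e_n t^{n-1}=at^2E'$;
\item $\sum_{n\ge1} cs\,e_{n-1}t^{n+1}=cs\,t^2E$;
\item $\sum_{n\ge 1}\Lt{n}(0)t^{n+1}=t\Bigl(\frac{2-bt}{1-bt-ct^2}-2\Bigr)=\frac{t(bt+2ct^2)}{1-bt-ct^2}$.
\end{itemize}
Combining these, the summed recurrence becomes
\[
sE-t=at^2E'-cst^2E+\frac{bt^2+2ct^3}{1-bt-ct^2}.
\]

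It remains to collect terms. Moving everything to one side gives $at^2E'-s(1+ct^2)E+t+\frac{bt^2+2ct^3}{1-bt-ct^2}=0$. Writing $t=\frac{t-bt^2-ct^3}{1-bt-ct^2}$ and adding the fraction, the numerator becomes $t-bt^2-ct^3+bt^2+2ct^3=t+ct^3=t(1+ct^2)$. This is exactly the stated inhomogeneous term. The initial condition $E(0;s)=e_0(s)=0$ is immediate.

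The only delicate point is bookkeeping at the boundary. The recurrence holds only for $n\ge1$, so I must check that the missing $n=0$ terms are accounted for correctly: $e_1t$ in the first sum, and $\Lt{0}(0)=2$, which is subtracted from the Lucas generating function. I must also check that $ne_n$ at $n=0$ contributes nothing to $E'$, which holds since that term has coefficient $0$. If the final numerator failed to simplify to $t(1+ct^2)$, the first place to recheck would be the treatment of $\Lt{0}(0)$.
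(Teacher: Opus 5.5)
Your proposal is correct and follows the paper's own proof: you multiply the first recurrence of Theorem~\ref{en-recursiva} by $t^{n+1}$, sum over $n\ge 1$, and substitute $\sum_{n\ge0}\Lt{n}(0)t^n=(2-bt)/(1-bt-ct^2)$. Your explicit simplification of the inhomogeneous term to $t(1+ct^2)/(1-bt-ct^2)$, together with your boundary bookkeeping, fills in exactly the algebra the paper leaves to the reader, and it checks out.
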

\begin{proof}
Let
\[
L(t):=\sum_{n\ge 0}\Lt{n}(0)t^n.
\] Evaluating the Lucas-type recurrence at zero gives
\[
\Lt{n}(0)=b\,\Lt{n-1}(0)+c\,\Lt{n-2}(0),
\qquad n\geq 2,
\]
with initial values $\Lt{0}(0)=2$ and $\Lt{1}(0)=b$. Therefore,
\[
L(t)=\frac{2-bt}{1-bt-ct^2}.
\]
Multiplying the first recurrence of Theorem~\ref{en-recursiva} by $t^{n+1}$
and summing over $n\geq 1$, we obtain
\[
s\bigl(E(t;s)-e_0-e_1t\bigr)=a t^2 E'(t;s)-c s t^2 E(t;s)+t\bigl(L(t)-\Lt{0}(0)\bigr).
\]
Using the initial values $e_0(s)=0$, $e_1(s)=1/s$, and $\Lt{0}(0)=2$, and substituting the expression for $L(t)$ gives the desired differential equation. The initial condition follows immediately from the definition of $E(t;s)$.
\end{proof}

The appearance of a first-order linear differential equation for the ordinary generating function is consistent
with a general feature of linear recurrences whose coefficients depend polynomially in $n$. A prototypical example is the three-term recurrence
\[
a_{n+1}+a_{n-1}=(\alpha n+\beta)a_n\qquad(\alpha\ne 0),
\]
with $a_0=0$ and $a_1=1$, studied by Janson~\cite{JansonDivergentGF}. In that case, the ordinary generating function also satisfies a first-order linear differential equation. Although such generating functions may have radius of convergence zero, they remain well defined as formal power series and may still be studied by analytic summation methods.

\subsection{Derivatives of the FiboLaplace sequence}

For a polynomial $f(t)$ and $s>0$, the standard differentiation rule for the Laplace transform gives
\begin{equation}\label{ideder}
\mathcal{L}\{t^n f(t)\}(s)
=
(-1)^n\frac{d^n}{ds^n}\mathcal{L}\{f(t)\}(s),
\qquad n\geq 0.
\end{equation}

The following result expresses the derivatives of $e_m(s)$ in terms of shifted entries of the FiboLaplace sequence.

\begin{theorem}\label{teor:Derivativ}
Let $m\geq n \geq 0$, and let $s>0$. Then
\[
\frac{d^{n}e_{m}(s)}{ds^{n}}
=\frac{1}{a^n}\sum_{\substack{j,k,\ell\ge 0\\ j+k+\ell=n}}
\binom{n}{j,k,\ell}(-1)^{j}\, b^{k} c^{\ell}\, e_{\,m+j-\ell}(s),
\]
where $\binom{n}{j,k,\ell}=\frac{n!}{j!\,k!\,\ell!}$.
\end{theorem}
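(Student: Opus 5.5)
By \eqref{ideder}, we have $\frac{d^n e_m}{ds^n}=(-1)^n\mathcal{L}\{t^n\Ft{m}(t)\}(s)$. The plan is to express $t^n\Ft{m}(t)$ as a linear combination of shifted Fibonacci-type polynomials and then apply the Laplace transform term by term.

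The key identity comes from the recurrence \eqref{Fibonacci;general:FT}. Solving it for $t\Ft{k}$ gives
\[
at\,\Ft{k}(t)=\Ft{k+1}(t)-b\Ft{k}(t)-c\Ft{k-1}(t),
\]
which holds for all $k\geq 1$. Introduce the shift operator $S$ defined by $S\Ft{k}=\Ft{k+1}$. Then multiplication by $-at$ acts as the operator $-S+b+cS^{-1}$ on the family $(\Ft{k})$.

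Multiply by $-at$ a total of $n$ times. Because the three operators $S$, $b$ and $cS^{-1}$ commute, the trinomial theorem gives
\[
(-at)^n\Ft{m}(t)=\sum_{j+k+\ell=n}\binom{n}{j,k,\ell}(-1)^j b^k c^\ell\,\Ft{m+j-\ell}(t).
\]
Dividing by $a^n$ then yields
\[
(-1)^n t^n\Ft{m}=\frac{1}{a^n}\sum_{j+k+\ell=n}\binom{n}{j,k,\ell}(-1)^j b^k c^\ell\,\Ft{m+j-\ell}.
\]
The proof will proceed by induction on $n$. At each step we apply the single-step identity to every term and regroup the coefficients using the Pascal-type recursion for multinomial coefficients. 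Finally, taking Laplace transforms and using linearity gives the stated formula.

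The main obstacle is checking that the index stays in range, so that the single-step identity, which needs $k\geq1$, is always applicable. At the $r$-th multiplication the smallest index that occurs is $m-(r-1)$. This is at least $1$ provided $m\geq r$, and the hypothesis $m\geq n$ guarantees this for all $r\leq n$.

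There is one edge case. If a term reaches index $0$, we have $\Ft{0}=0$, and it contributes nothing on either side. For example, with $m=n$ the final sum contains $e_0=0$, which is harmless. For any index $k=0$ that arises in an intermediate step, we only ever multiply $\Ft{k}$ when $k\geq 1$, and indeed when $m\geq n$ every intermediate index we multiply is at least $m-n+1\geq 1$. I would verify this bookkeeping carefully in the inductive step.
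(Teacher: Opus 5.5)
Your proposal is correct and is essentially the paper's proof. Both arguments use the recurrence to turn multiplication by $-at$ into the shift combination $-S+b+cS^{-1}$ and expand its $n$th power by the multinomial theorem. The only difference is the order: you iterate on the polynomials $\Ft{m}(t)$ and apply \eqref{ideder} once with general $n$, whereas the paper applies \eqref{ideder} with $n=1$ and then iterates $d/ds$ on the sequence $e_m(s)$ through its operator $\mathcal{D}$. Your check that every index at which the one-step identity is applied is at least $m-n+1\geq 1$ makes explicit a point the paper leaves implicit.
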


\begin{proof}
The case $n=0$ is immediate. Suppose that $n\geq 1$. Since $m\geq n$, we have $m\geq 1$, and the defining recurrence gives
\[
t\,\Ft{m}(t)
=
\frac{1}{a}
\Bigl(
\Ft{m+1}(t)-b\,\Ft{m}(t)-c\,\Ft{m-1}(t)
\Bigr).
\]
Taking Laplace transforms and using  \eqref{ideder} with $n=1$, we obtain 
\begin{equation}\label{eq:first-deriv-proof}
-\frac{d}{ds}e_m(s)=\frac1a\Bigl(e_{m+1}(s)-b\,e_m(s)-c\,e_{m-1}(s)\Bigr).
\end{equation}
Define the linear operator
\[
(\mathcal D f)_m:=f_{m+1}-b f_m-c f_{m-1}.
\]
Then \eqref{eq:first-deriv-proof} becomes
\[
-\frac{d}{ds}e_m(s)=\frac1a(\mathcal D e)_m.
\]
Since $\mathcal{D}$ is independent of $s$, repeated differentiation gives
\begin{equation}\label{eq:Tn-proof}
\frac{d^n}{ds^n}e_m(s)
=
\frac{(-1)^n}{a^n}
(\mathcal{D}^n e)_m.
\end{equation}

To expand $(\mathcal{D}^n e)_m$, choose the shift $+1$ exactly $j$ times, the factor $-b$ exactly $k$ times, and the shift $-1$ with factor $-c$ exactly $\ell$ times, where
$j+k+\ell=n$. The resulting index is $m+j-\ell$, and the multinomial theorem gives
\[
(\mathcal{D}^n e)_m
=
\sum_{\substack{j,k,\ell\geq 0\\j+k+\ell=n}}
\binom{n}{j,k,\ell}
(-1)^{k+\ell}b^k c^\ell
e_{m+j-\ell}(s).
\]
 Substituting into
\eqref{eq:Tn-proof} and using $n=j+k+\ell$, we have
\[
(-1)^n\frac{d^{n}}{ds^{n}}e_m(s)
=
\frac{1}{a^n}
\sum_{\substack{j,k,\ell\ge 0\\ j+k+\ell=n}}
\binom{n}{j,k,\ell}(-1)^{k+\ell}b^k c^\ell\,e_{m+j-\ell}(s),
\]
which is equivalent to the stated formula.
\end{proof}

The first two cases of Theorem~\ref{teor:Derivativ} are
\begin{align*}
\frac{d}{ds}e_m(s)
&=
\frac{1}{a}
\Bigl(
-e_{m+1}(s)+b\,e_m(s)+c\,e_{m-1}(s)
\Bigr),
\quad m\geq 1,
\\
\frac{d^2}{ds^2}e_m(s)
&=
\frac{1}{a^2}
\Bigl(
e_{m+2}(s)-2b\,e_{m+1}(s)
+(b^2-2c)e_m(s)
+2bc\,e_{m-1}(s)
+c^2e_{m-2}(s)
\Bigr),
\quad m\geq 2.
\end{align*}
When $b=0$, the summation index $k$ must be zero, and
Theorem~\ref{teor:Derivativ} becomes
\[
\frac{d^n e_m(s)}{ds^n}
=
\frac{1}{a^n}
\sum_{\ell=0}^{n}
\binom{n}{\ell}
(-1)^{n-\ell}c^\ell
e_{m+n-2\ell}(s),
\qquad m\geq n.
\]
In particular, for the specialization $a=c=1$ and $b=0$,
\[
\frac{d^{n}e_{m}(s)}{ds^{n}}
=\sum_{\ell=0}^{n}\binom{n}{\ell}(-1)^{n-\ell}\,e_{\,m+n-2\ell}(s), \qquad m\geq n.
\]

\section{Combinatorial Interpretation}

In this section, we give a combinatorial interpretation of the specialized sequence $e_n:=e_n(1)$. Although the Laplace parameter is fixed at $s=1$, the sequence $(e_n)_{n\geq 0}$ still depends on the parameters $a$, $b$, and $c$. We construct a weighted tiling model for this sequence and use it to obtain several of its properties from a combinatorial point of view.

It is well known that the Fibonacci number $F_{n+1}$ counts the tilings of a $1\times n$ board by unit squares and dominoes. Benjamin and Quinn~\cite{bookBenjamin} use this tiling model to provide combinatorial proofs of many Fibonacci identities. We now develop an analogous weighted tiling model for the generalized Fibonacci-type polynomial sequence.

Let $\mathcal{T}_n$ be the set of tilings of a $1\times n$ board by unit squares and dominoes.  A unit square has weight either $at$ or $b$, and a domino has
weight $c$. The \emph{weight} of a tiling $P\in\mathcal{T}_n$, denoted by $\wt(P)$, is the product of the weights of its tiles. Let $T_n(t)$ denote the total weight of all tilings in $\mathcal{T}_n$, that is,
\[
T_n(t):=\sum_{P\in\mathcal{T}_n}\wt(P).
\]

For example,
\[
T_3(t)=b^3+2bc+3ab^2t+2act+3a^2bt^2+a^3t^3,
\]
and Figure~\ref{FigTil} shows all tilings contributing to $T_3(t)$.

\begin{figure}[ht!]
\centering
\noindent
\Tiling[3]{A,A,A}\hspace{0.4cm}\Tiling[3]{A,A,B}\hspace{0.4cm}\Tiling[3]{A,B,A}\hspace{0.4cm} \Tiling[3]{A,B,B}\\[4mm]
\Tiling[3]{B,A,A}\hspace{0.4cm}\Tiling[3]{B,A,B} \hspace{0.4cm}\Tiling[3]{B,B,A}\hspace{0.4cm}\Tiling[3]{B,B,B}\\[4mm] 
\Tiling[3]{A,C}\hspace{0.4cm}\Tiling[3]{B,C}\hspace{0.4cm}\Tiling[3]{C,A}\hspace{0.4cm}\Tiling[3]{C,B}
\caption{All weighted tilings of a $1\times 3$ board contributing to $T_3(t)$.}
\label{FigTil}
\end{figure}

\begin{proposition}\label{prop:Fn-tilings} For every $n\geq 0$, the total weight $T_n(t)$ of all tilings of a $1\times n$ board satisfies \[ T_n(t)=\Ft{n+1}(t). \] Thus, the generalized Fibonacci-type polynomial $\Ft{n+1}(t)$ enumerates these tilings by total weight. \end{proposition}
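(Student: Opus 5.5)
The plan is to show that $T_n(t)$ and $\Ft{n+1}(t)$ satisfy the same recurrence with the same initial values, and then conclude by strong induction on $n$. For the initial values, interpret the empty board ($n=0$) as having exactly one tiling, the empty tiling, whose weight is the empty product $1$. This gives $T_0(t)=1=\Ft{1}(t)$. The board of length $1$ admits only a single unit square, of weight $at$ or $b$, so $T_1(t)=at+b$. By \eqref{Fibonacci;general:FT} with $\Ft{0}=0$ and $\Ft{1}=1$, we have $\Ft{2}(t)=at+b$, so the second initial value also matches.

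For the recurrence, take $n\geq 2$ and partition $\mathcal{T}_n$ according to the last tile.
\begin{itemize}
\item If the last tile is a unit square, removing it gives a bijection onto $\mathcal{T}_{n-1}$. The removed square contributes weight $at$ or $b$, and the weight is multiplicative over tiles, so this class contributes $(at+b)T_{n-1}(t)$.
\item If the last tile is a domino, removing it gives a bijection onto $\mathcal{T}_{n-2}$. This class contributes $c\,T_{n-2}(t)$.
\end{itemize}
Adding the two classes gives
\[
T_n(t)=(at+b)T_{n-1}(t)+c\,T_{n-2}(t), \qquad n\geq 2.
\]
This is exactly the recurrence \eqref{Fibonacci;general:FT} shifted by one index, since $\Ft{n+1}(t)=(at+b)\Ft{n}(t)+c\,\Ft{n-1}(t)$ for $n\geq 1$. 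Strong induction on $n$ then yields $T_n(t)=\Ft{n+1}(t)$ for all $n\geq 0$.

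There is no real obstacle here. The only points needing care are the convention that the empty tiling has weight $1$, and the bookkeeping that a unit square comes in two colors, so its total weight is the sum $at+b$. One can sanity-check the result against the displayed $T_3(t)$ by expanding $\Ft{4}(t)=(at+b)^3+2c(at+b)$, which reproduces $b^3+2bc+3ab^2t+2act+3a^2bt^2+a^3t^3$.
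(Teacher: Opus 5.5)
Your proof is correct and follows essentially the same route as the paper: split by the last tile to get $T_n(t)=(at+b)T_{n-1}(t)+c\,T_{n-2}(t)$ for $n\ge 2$, match $T_0(t)=1=\Ft{1}(t)$ and $T_1(t)=at+b=\Ft{2}(t)$, and conclude by induction. Strictly, deleting a final unit square is a bijection onto $\mathcal{T}_{n-1}\times\{at,b\}$ rather than onto $\mathcal{T}_{n-1}$, but your weight count $(at+b)T_{n-1}(t)$ already accounts for this.
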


\begin{proof}
Any tiling of length $n$ ends either with a unit square or with a domino. In the first case, the last tile contributes a factor $at+b$, and the preceding tiles form an arbitrary tiling of length $n-1$. In the second case, the last tile contributes a factor $c$, and the preceding tiles form an arbitrary tiling of length $n-2$. Therefore, for $n\ge 2$,
\[
T_n(t)=(at+b)\,T_{n-1}(t)+c\,T_{n-2}(t).
\]
The initial values are $T_0(t)=1$ (the empty tiling) and $T_1(t)=at+b$. Thus $T_0(t)=\Ft{1}(t)$ and $T_1(t)=\Ft{2}(t)$. Since the sequences $(T_n(t))_{n\ge 0}$ and $(\Ft{n+1}(t))_{n\ge 0}$ satisfy the same recurrence and have the same initial values, they are equal  for all $n\ge 0$.
\end{proof}

\subsection{Combinatorial interpretation of the FiboLaplace sequence}

We now introduce a weighted model based on colored tilings of a $1\times n$ board by unit squares and dominoes. Related colored tiling models have been considered, for example, by Khadir et al.~\cite{KNS}.

There are two types of unit squares. A square of type $A$ has weight $a$, while a square of type $B$ has weight $b$. If a tiling contains exactly $k$ type-$A$ squares, then those squares are colored bijectively with the colors $1,\ldots,k$. Thus, the colors may be assigned in exactly $k!$ ways. Each domino has weight $c$. The weight of a colored tiling is the product of the weights of its tiles. We denote by $\mathcal{C}_n$ the set of all such colored tilings of a $1\times n$ board.

Let $w_n$ be the total weight of all tilings in $\mathcal{C}_n$, that is,
\[
w_n:=\sum_{P\in\mathcal{C}_n}\wt(P).
\]

For example, 
\[
w_3=6a^3+6a^2b+3ab^2+b^3+2ac+2bc,
\]
and Figure~\ref{FigTil2} shows all colored tilings contributing to $w_3$.

\begin{figure}[ht!]
\centering
\noindent
\TilinC[3]{A[1],A[2],A[3]}\hspace{0.4cm}\TilinC[3]{A[1],A[3],A[2]} \hspace{0.4cm}\TilinC[3]{A[2],A[1],A[3]}  \hspace{0.4cm}\TilinC[3]{A[2],A[3],A[1]}   \hspace{0.4cm}\TilinC[3]{A[3],A[1],A[2]}  
\hspace{0.4cm}\TilinC[3]{A[3],A[2],A[1]} \\[4mm]
\TilinC[3]{A[1],A[2],B}\hspace{0.4cm}\TilinC[3]{A[2],A[1],B} \hspace{0.4cm}\TilinC[3]{A[1],B,A[2]}  \hspace{0.4cm}\TilinC[3]{A[2],B,A[1]}   \hspace{0.4cm}\TilinC[3]{B,A[1],A[2]}  
\hspace{0.4cm}\TilinC[3]{B,A[2],A[1]} \\[4mm]
\TilinC[3]{A[1],B,B}\hspace{0.4cm}\TilinC[3]{B,A[1],B} \hspace{0.4cm}\TilinC[3]{B,B,A[1]}  \hspace{0.4cm}\TilinC[3]{B,B,B}   \hspace{0.4cm}\TilinC[3]{A[1],C}  
\hspace{0.4cm}\TilinC[3]{C,A[1]}  \\[4mm]
\TilinC[3]{B,C}  
\hspace{0.4cm}\TilinC[3]{C,B} 
\caption{All colored tilings of a $1\times 3$ board contributing to $w_3$.}
\label{FigTil2}
\end{figure}

Consider the  generating function
\[
E_{a,b,c}(t):=\sum_{n\ge 0} w_nt^n.
\]
Notice that the variable $t$ records the length of the tiling.

We now derive an explicit expression for $E_{a,b,c}(t)$. A colored tiling containing exactly $k$ type-$A$ squares can be uniquely written as
\[
G_0\,\square_{\sigma(1)}\,G_1\,\square_{\sigma(2)}\cdots \square_{\sigma(k)}\,G_k,
\]
where $\sigma$ is a permutation of $\{1,\dots,k\}$ recording,  from left to right, the colors assigned to the type-$A$ squares, and each $G_j$ is a possibly empty tiling consisting  only of type-$B$ squares and dominoes. The generating function for such a block $G_j$ is
\[
G(t)=\sum_{n\geq 0}(bt+ct^2)^n=\frac{1}{1-bt-ct^2}.
\]
For fixed $k$, the colors of the type-$A$ squares can be assigned in $k!$ ways. The $k$ type-$A$ squares contribute $(at)^k$, while the $k+1$ blocks contribute $G(t)^{k+1}$. Therefore,
\[
E_{a,b,c}(t)
=
\sum_{k\geq 0} k!\,(at)^k G(t)^{k+1}
=
G(t)\sum_{k\geq 0} k!\,\bigl(atG(t)\bigr)^k.
\]

Define the formal power series
\[
\Phi(t):=\sum_{k\geq 0}k!\,t^k.
\]
The preceding argument gives the following result.

\begin{theorem}\label{thm:Eabc}
We have 
\[
E_{a,b,c}(t)
=
G(t)\,\Phi\bigl(atG(t)\bigr),
\]
where
\[
G(t)=\frac{1}{1-bt-ct^2}.
\]
\end{theorem}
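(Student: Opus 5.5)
The plan is to formalize the decomposition argument given just before the statement, treating all series as formal power series in $t$, so the divergence of $\Phi$ causes no trouble. The composition $\Phi(atG(t))$ is well defined because $atG(t)$ has zero constant term. Hence the coefficient of $t^n$ in $\Phi(atG(t))$ depends only on the finitely many terms $k!\,(atG(t))^k$ with $k\le n$.

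First I will set up the bijection. For a colored tiling $P\in\mathcal{C}_n$ with exactly $k$ type-$A$ squares, I cut the board immediately before and after each type-$A$ square. This produces a unique word $G_0\,\square_{\sigma(1)}\,G_1\cdots\square_{\sigma(k)}\,G_k$. Each $G_j$ is a (possibly empty) tiling using only type-$B$ squares and dominoes, and $\sigma\in S_k$ lists the colors from left to right. Cutting is legitimate because dominoes never overlap a type-$A$ square. Conversely, concatenating any $(k+1)$-tuple of such blocks with any $\sigma\in S_k$ gives a valid colored tiling. The lengths add, and the weight is multiplicative: $\wt(P)=a^k\prod_j \wt(G_j)$. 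So the map is a weight- and length-preserving bijection between $\mathcal{C}_n$ and the disjoint union over $k$ of $S_k\times\{\text{block sequences}\}$.

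Second I will compute the block generating function. A block is a sequence of letters $B$ (weight $b$, length $1$) and $C$ (weight $c$, length $2$). Its generating function is therefore $\sum_{m\ge0}(bt+ct^2)^m=1/(1-bt-ct^2)=G(t)$, which is valid formally since $bt+ct^2$ has no constant term. Since blocks are chosen independently, the $k+1$ blocks contribute $G(t)^{k+1}$. The $k$ type-$A$ squares contribute $(at)^k$, and the colorings contribute the factor $k!$.

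Finally I will sum over $k\ge0$ to get $E_{a,b,c}(t)=\sum_k k!\,(at)^kG(t)^{k+1}=G(t)\Phi(atG(t))$. The only delicate point is justifying the interchange of summation, that is, extracting $[t^n]$ from an infinite sum of series. This follows because $(at)^kG(t)^{k+1}$ is $O(t^k)$, so the family is summable in the $t$-adic topology. I do not expect a substantive obstacle beyond stating this carefully.
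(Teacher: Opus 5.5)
Your proposal is correct and follows essentially the paper's argument: decompose each colored tiling as $G_0\,\square_{\sigma(1)}\,G_1\cdots\square_{\sigma(k)}\,G_k$, use $G(t)$ for each block, and obtain $\sum_{k\ge 0}k!\,(at)^kG(t)^{k+1}$. Your formal-power-series remarks go slightly beyond the paper, which leaves them implicit: $atG(t)$ has zero constant term, so the composition $\Phi(atG(t))$ is defined, and the family $(at)^kG(t)^{k+1}$ is $t$-adically summable.
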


The following result gives a combinatorial derivation of the recurrence for the sequence $w_n$.

\begin{theorem}\label{thm:wn-recurrence}
The sequence $(w_n)_{n\ge0}$ satisfies, for $n\ge 4$,
\begin{equation}\label{eq:wn-rec}
w_n=(b+an)\,w_{n-1}-ab(n-1)\,w_{n-2}+\bigl(bc-ac(n-2)\bigr)\,w_{n-3}+c^2\,w_{n-4},
\end{equation}
with initial values
\[
w_0=1,\quad 
w_1=a+b,\quad
w_2=2a^2+2ab+b^2+c,\quad
w_3=6a^3+6a^2b+3ab^2+b^3+2ac+2bc.
\]
\end{theorem}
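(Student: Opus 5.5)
The plan is to identify $w_n$ with a shifted FiboLaplace value at $s=1$ and then read off \eqref{eq:wn-rec} from Theorem~\ref{thm:en-4term}. Concretely, I will show that
\[
w_n=e_{n+1}(1),\qquad n\ge 0.
\]

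\emph{Step 1: the identification.} By Proposition~\ref{prop:Fn-tilings}, $\Ft{n+1}(t)=T_n(t)=\sum_{P\in\mathcal T_n}\wt(P)$.
\begin{itemize}
\item Group the tilings in $\mathcal T_n$ by their tile pattern. A pattern with $k$ squares of weight $at$, $j$ squares of weight $b$ and $d$ dominoes contributes the monomial $a^k b^j c^d t^k$.
\item Since $\mathcal L\{t^k\}(1)=\int_0^\infty e^{-t}t^k\,dt=k!$, the Laplace transform at $s=1$ sends this monomial to $k!\,a^kb^jc^d$.
\item $k!\,a^kb^jc^d$ is exactly the total weight of the $k!$ colorings, in $\mathcal C_n$, of the same underlying tiling with its $k$ type-$A$ squares colored bijectively by $1,\dots,k$.
\end{itemize}
Summing over all tilings, linearity of $\mathcal L$ gives $e_{n+1}(1)=\mathcal L\{\Ft{n+1}\}(1)=w_n$.

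\emph{Step 2: the recurrence.} Set $s=1$ in Theorem~\ref{thm:en-4term} with index $N\ge 3$:
\[
e_{N+1}=(b+Na)e_N-b(N-1)a\,e_{N-1}+\bigl(bc-c(N-2)a\bigr)e_{N-2}+c^2e_{N-3}.
\]
Rewrite this using $e_{m+1}=w_m$. The issue is which $N$ are admissible. At $N=3$ the last term is $e_0=0$, which is not of the form $w_{-1}$. So I will use $N\ge 4$. Renaming $N$ as $n$, we have $e_{n+1}=w_n$, $e_n=w_{n-1}$, $e_{n-1}=w_{n-2}$, $e_{n-2}=w_{n-3}$ and $e_{n-3}=w_{n-4}$. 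Substituting these gives precisely \eqref{eq:wn-rec} for $n\ge 4$.

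\emph{Step 3: initial values.} $w_0,w_1,w_2$ can be computed directly from small colored tilings; they agree with $e_1(1)=1$, $e_2(1)=a+b$ and $e_3(1)=(b^2+c)+2ab+2a^2$ from Theorem~\ref{thm:en-4term}. The value $w_3$ was already computed above from Figure~\ref{FigTil2}.

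\emph{Main obstacle.} If a genuinely combinatorial derivation is wanted, the hard part is the term $an\,w_{n-1}$. The natural attempt is to remove the type-$A$ square of largest color and insert it into one of the available positions, using the decomposition behind Theorem~\ref{thm:Eabc}. The correction terms then come from the $B$/domino blocks through $G(t)$, i.e. from the factor $1-bt-ct^2$. Equivalently, one can use $E_{a,b,c}(t)=G\,\Phi(atG)$ together with $t^2\Phi'(t)=(1-t)\Phi(t)-1$ to derive a first-order ODE mirroring Theorem~\ref{thm:ODE-E}, and extract coefficients. I expect this route to reproduce the same recurrence after clearing the denominator $1-bt-ct^2$. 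However, the bookkeeping of the index shifts there is the delicate point, which is why I would rely on Steps 1–2 as the main argument.
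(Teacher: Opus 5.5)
Your proof is correct, but it runs in the opposite logical direction from the paper's.

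\textbf{The paper's route.} It proves \eqref{eq:wn-rec} purely combinatorially. It weights pairs $(P,\gamma)$, where $P\in\mathcal C_{n-1}$ and $\gamma$ is one of the $n$ grid lines. It then inserts a type-$A$ square with a new largest color at $\gamma$, or, if $\gamma$ cuts a domino, replaces that domino by such a square. This gives $an\,w_{n-1}=w_n-q_n+c(w_{n-2}-q_{n-2})$, where $q_n$ is the weight of the $A$-free tilings. Next, $r_n:=w_n+cw_{n-2}-an\,w_{n-1}=q_n+cq_{n-2}$ satisfies $r_n=br_{n-1}+cr_{n-2}$, and unwinding this gives the four-term recurrence. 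Only afterwards does the paper obtain Theorem~\ref{thm:en=wn}, by matching with Theorem~\ref{thm:en-4term}.

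\textbf{Your route.} You prove Theorem~\ref{thm:en=wn} first, directly from Proposition~\ref{prop:Fn-tilings} and $\mathcal L\{t^k\}(1)=k!$, and then transport Theorem~\ref{thm:en-4term}. This is not circular, since Section~2 never uses tilings. Your restriction to $N\ge 4$, so that $e_{N-3}=w_{N-4}$, is exactly right.

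\textbf{Comparison.} Your argument is shorter and explains conceptually why the colorings contribute $k!$: these are the moments $\int_0^\infty t^k e^{-t}\,dt$. It also extends at once to general $s$, by replacing $a$ with $a/s$ and multiplying by $1/s$. What it gives up is what the paper is after at this point: a self-contained combinatorial derivation of the recurrence. Yours depends on the Lucas--Fibonacci identities quoted from the literature at the start of Section~2. In fact the paper's $r_n=q_n+cq_{n-2}$ equals $\Lt{n}(0)$, because $q_n=\Ft{n+1}(0)$. So its bijection is a combinatorial counterpart of Part~\ref{en-recursivaPart1} of Theorem~\ref{en-recursiva} at $s=1$.

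The combinatorial attempt you sketch under ``Main obstacle'' is close to the paper's argument. The ingredient that makes it work is the case where the marked grid line passes through a domino: trading that domino for the new type-$A$ square produces the correction $c(w_{n-2}-q_{n-2})$. The other ingredient is subtracting $q_n$, since $A$-free tilings are not in the image.
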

\begin{proof}
Let $\mathcal{Q}_n$ be the set of tilings in $\mathcal{C}_n$ with no type-$A$ squares, and let $q_n$ be their total weight. Such
tilings contain only type-$B$ squares and dominoes. Considering
the last tile gives
\begin{equation}\label{eq:gn-recurrence}
q_n=b\,q_{n-1}+c\,q_{n-2},
\qquad n\geq 2.
\end{equation}
Therefore $w_n-q_n$ is the total weight of the tilings in $\mathcal{C}_n$
containing at least one type-$A$ square. 

We first prove
\begin{equation}\label{eq:wn-positive-recurrence}
an\,w_{n-1}
=
w_n-q_n+c\bigl(w_{n-2}-q_{n-2}\bigr),
\qquad n\geq 2.
\end{equation}
Consider pairs $(P,\gamma)$, where $P\in\mathcal{C}_{n-1}$ and
$\gamma$ is one of the $n$ vertical grid lines of the board, including the two boundary lines. We assign weight $a\wt(P)$ to each such pair. Since there are $n$ choices for $\gamma$, the total weight of all
these pairs is $an\,w_{n-1}$.

Suppose that $P$ contains exactly $k$ type-$A$ squares. If $\gamma$ lies between two tiles or is a boundary line, insert a type-$A$ square
at $\gamma$ and give it color $k+1$.   This gives a tiling in $\mathcal{C}_n$ containing at least one type-$A$ square, and the
construction is weight preserving.

If $\gamma$ passes through a domino, replace that domino by a type-$A$
square of color $k+1$. This gives a tiling $F\in\mathcal{C}_{n-2}$
containing at least one type-$A$ square. Moreover, $a\,\wt(P)=c\,\wt(F)$, since a domino of weight $c$ has been replaced by a type-$A$ square
of weight $a$. 

Both constructions are reversible. In the first case, remove the
type-$A$ square with largest color and mark the grid line created by
its removal. In the second case, replace the type-$A$ square with
largest color by a domino and mark its central grid line.

Thus the first case contributes $w_n-q_n$, while the second contributes
$c\bigl(w_{n-2}-q_{n-2}\bigr)$. Since the two cases partition all
pairs $(P,\gamma)$, \eqref{eq:wn-positive-recurrence} follows.

Now define
\[
r_n:=w_n+c\,w_{n-2}-an\,w_{n-1}.
\]
By~\eqref{eq:wn-positive-recurrence}, $r_n=q_n+c\,q_{n-2}$. Using \eqref{eq:gn-recurrence}, for $n\geq 4$, we have
\begin{align*}
r_n
&=q_n+c\,q_{n-2}\\
&=b\,q_{n-1}+c\,q_{n-2}
  +bc\,q_{n-3}+c^2q_{n-4}\\
&=b\bigl(q_{n-1}+c\,q_{n-3}\bigr)
  +c\bigl(q_{n-2}+c\,q_{n-4}\bigr)\\
&=b\,r_{n-1}+c\,r_{n-2}.
\end{align*}
Substituting the definition of $r_n$ gives
\begin{multline*}
w_n+c\,w_{n-2}-an\,w_{n-1}
\\=
b\Bigl(
w_{n-1}+c\,w_{n-3}-a(n-1)w_{n-2}
\Bigr)+
c\Bigl(
w_{n-2}+c\,w_{n-4}-a(n-2)w_{n-3}
\Bigr).
\end{multline*}
Canceling $c\,w_{n-2}$ and collecting terms proves \eqref{eq:wn-rec}. The initial values follow by direct inspection of the tilings of lengths $0,1,2$, and $3$.
\end{proof}
\par\medskip

Theorems~\ref{thm:wn-recurrence} and~\ref{thm:en-4term} show that,
after specializing $s=1$, the sequences $(w_n)_{n\geq 0}$ and
$(e_{n+1})_{n\geq 0}$ satisfy the same fourth-order recurrence and
have the same initial values. They therefore coincide. This gives the
following weighted tiling interpretation of the FiboLaplace sequence.
\begin{theorem}\label{thm:en=wn}
Under the specialization $s=1$, for every $n\geq 0$,
\[
e_{n+1}=w_n.
\]
Equivalently, the FiboLaplace number $e_{n+1}$ is the total weight of
the tilings in $\mathcal{C}_n$.
\end{theorem}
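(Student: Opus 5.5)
The plan is to compare the two fourth-order recurrences directly and then close the argument by induction on $n$. Set $u_n:=e_{n+1}(1)$ for $n\geq 0$. We will show that $(u_n)$ satisfies exactly the recurrence \eqref{eq:wn-rec} of Theorem~\ref{thm:wn-recurrence}, with the same initial values $u_0,\dots,u_3$ as $w_0,\dots,w_3$.

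\textbf{Step 1 (the recurrence).} Take Theorem~\ref{thm:en-4term} at $s=1$ with index $n$ replaced by $n-1$, which is valid for $n-1\geq 3$, that is, $n\geq 4$. It reads
\[
e_{n}=(b+(n-1)a)e_{n-1}-ab(n-2)e_{n-2}+\bigl(bc-ac(n-3)\bigr)e_{n-3}+c^2e_{n-4}.
\]
This is stated in the $e$-indexing, whereas \eqref{eq:wn-rec} is in the $w$-indexing with $w_n\leftrightarrow e_{n+1}$. So we should instead apply Theorem~\ref{thm:en-4term} with index $n$ itself (valid for $n\geq 3$) and write it in terms of $u_n=e_{n+1}$:
\[
u_n=(b+an)u_{n-1}-ab(n-1)u_{n-2}+\bigl(bc-ac(n-2)\bigr)u_{n-3}+c^2u_{n-4}.
\]
This identity holds for all $n\geq 3$, where $u_{-1}=e_0=0$. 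It therefore matches \eqref{eq:wn-rec} term by term for $n\geq 4$.

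\textbf{Step 2 (initial values).} For $u_0,u_1,u_2$, read off $e_1,e_2,e_3$ at $s=1$ from Theorem~\ref{thm:en-4term}. This gives $1$, $a+b$, and $2a^2+2ab+b^2+c$, which agree with $w_0,w_1,w_2$.

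For $u_3=e_4(1)$, apply the $n=3$ case of the recurrence from Step 1, using $u_{-1}=0$:
\[
u_3=(b+3a)u_2-2ab\,u_1+bc\,u_0.
\]
Expanding,
\[
(b+3a)(2a^2+2ab+b^2+c)=6a^3+8a^2b+5ab^2+b^3+3ac+bc,
\]
and subtracting $2ab(a+b)=2a^2b+2ab^2$ and adding $bc$ gives
\[
u_3=6a^3+6a^2b+3ab^2+b^3+3ac+2bc.
\]

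\textbf{Main obstacle.} The theorem's listed value is $w_3=\cdots+2ac+2bc$, while the computation above gives $3ac$ in place of $2ac$. This discrepancy is exactly where I expect trouble, so I would recheck $w_3$ directly from the tilings. The colored tilings containing a domino and one $A$-square are $A[1]C$ and $CA[1]$, giving $2ac$. I would also recheck $e_4$ directly: at $a=c=1$, $b=0$, Table~\ref{tab:example-FiboLaplace} gives $e_4(1)=2(1+3)=8$, whereas $w_3=6+2=8$ and the value $6+3=9$ from the recurrence disagrees.

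This suggests that the $n=3$ case of Theorem~\ref{thm:en-4term} must be used with care. Its derivation via $A_n=bA_{n-1}+cA_{n-2}$ requires $n\geq 3$ together with Theorem~\ref{en-recursiva} at $n-2\geq 1$, and I would verify that boundary case. To avoid depending on it, I would compute $e_4(1)$ directly from Theorem~\ref{en-recursiva}(1) at $s=1$:
\[
e_4=3a\,e_3-c\,e_2+\Lt{3}(0),\qquad \Lt{3}(0)=b^3+3bc.
\]
This yields
\[
6a^3+6a^2b+3ab^2+3ac-ac-bc+b^3+3bc=6a^3+6a^2b+3ab^2+b^3+2ac+2bc,
\]
which equals $w_3$.

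\textbf{Step 3 (conclusion).} With $u_k=w_k$ for $0\leq k\leq 3$ and the common recurrence for $n\geq 4$, strong induction gives $u_n=w_n$ for all $n\geq 0$. The combinatorial reformulation in the statement then follows from the definition of $w_n$.
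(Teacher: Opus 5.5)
Your final argument is correct and matches the paper's: $(e_{n+1})$ and $(w_n)$ satisfy the same fourth-order recurrence from Theorems~\ref{thm:wn-recurrence} and~\ref{thm:en-4term}, they share initial values, and induction finishes the proof.

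Your ``main obstacle,'' however, comes from an arithmetic slip, not a defect in Theorem~\ref{thm:en-4term}. At $n=3$ the coefficient of $u_0$ is $bc-ac(n-2)=bc-ac$, not $bc$. With the correct coefficient, the recurrence gives $u_3=6a^3+6a^2b+3ab^2+b^3+2ac+2bc=w_3$; for $a=c=1$, $b=0$ it gives $3\cdot 3-1=8$, not $9$. So the $n=3$ case is fine, and your detour through Theorem~\ref{en-recursiva} is valid but unnecessary. The detour does usefully make explicit the check $e_4=w_3$, which the paper leaves implicit.
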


The tiling model in Theorem~\ref{thm:en=wn} also yields an explicit
formula for the FiboLaplace sequence. We refine the total weight according to the number of dominoes and type-$A$ squares.

\begin{theorem}\label{en-cerrada1}
For all $n\geq 1$,
\[
e_n=\sum_{i=0}^{\lfloor\frac{n-1}{2}\rfloor}\ \sum_{k=0}^{n-2i-1}
\binom{n-i-1}{i}\binom{n-2i-1}{k}\,a^{k}\,b^{n-2i-1-k}\,c^{i}\,k!.
\]
\end{theorem}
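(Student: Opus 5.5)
By Theorem~\ref{thm:en=wn}, $e_n=w_{n-1}$ for $n\geq 1$. It therefore suffices to show that the total weight of the colored tilings in $\mathcal{C}_{n-1}$ equals the double sum. I will classify the tilings by the number $i$ of dominoes and the number $k$ of type-$A$ squares, and then count each class directly.

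Fix $i$ with $0\leq i\leq\lfloor (n-1)/2\rfloor$. A tiling of the $1\times(n-1)$ board with exactly $i$ dominoes has $n-1-2i$ unit squares, hence $(n-1-2i)+i=n-1-i$ tiles in total. Choosing which of these tile positions are dominoes gives the standard Benjamin--Quinn count $\binom{n-i-1}{i}$ for the underlying square/domino shapes. Next, among the $n-2i-1$ unit squares, choose $k$ of them to be type $A$ and the rest type $B$; this can be done in $\binom{n-2i-1}{k}$ ways. The $k$ type-$A$ squares are then colored bijectively with $1,\dots,k$ in $k!$ ways.

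Each such colored tiling has weight $a^k b^{n-2i-1-k}c^i$, since there are $k$ type-$A$ squares, $n-2i-1-k$ type-$B$ squares, and $i$ dominoes. The three choices (domino positions, types of the squares, coloring) are independent and together determine the colored tiling uniquely. Summing over $0\le k\le n-2i-1$ and over $i$ therefore gives exactly the stated formula for $w_{n-1}=e_n$.

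There is no real obstacle here. The only point that needs care is the bookkeeping: the domino count $\binom{n-i-1}{i}$ refers to a board of length $n-1$, not $n$, because of the index shift in Theorem~\ref{thm:en=wn}, and the range of $i$ must match this. As a check, I would also verify the formula against the generating function of Theorem~\ref{thm:Eabc}. Expanding $G(t)^{k+1}=(1-bt-ct^2)^{-(k+1)}$ and extracting coefficients gives the same sum, so it confirms the count independently.
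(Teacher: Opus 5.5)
Your proposal is correct and follows the paper's argument: you reduce to $w_{n-1}$ via Theorem~\ref{thm:en=wn}, then count the colored tilings of length $n-1$ with $i$ dominoes and $k$ type-$A$ squares as $\binom{n-i-1}{i}\binom{n-2i-1}{k}k!$, each of weight $a^k b^{n-2i-1-k}c^i$. Your generating-function check via Theorem~\ref{thm:Eabc} also works, since extracting coefficients gives $\binom{n-1-i}{k}\binom{n-1-i-k}{i}$, which equals $\binom{n-1-i}{i}\binom{n-1-2i}{k}$.
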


\begin{proof}
By Theorem~\ref{thm:en=wn}, we have $e_n=w_{n-1}$. Hence $e_n$ is
the total weight of the colored tilings of a $1\times(n-1)$ board. Fix $i$ and $k$, and consider tilings having exactly $i$ dominoes and
$k$ type-$A$ squares. The dominoes cover $2i$ cells, leaving
$n-2i-1$ cells to be covered by squares. Regarding each domino as a
single object, such a tiling consists of $i$ dominoes and
$n-2i-1$ squares. Thus the dominoes can be placed in $\binom{n-i-1}{i}$ ways.

Next, choose  $k$ of these $n-2i-1$ squares to be of type $A$, the
remaining $n-2i-1-k$ squares are of type $B$. There are $\binom{n-2i-1}{k}$ ways to make this choice.  The $k$ type-$A$ squares can then be assigned the colors $1,\ldots,k$ in $k!$ ways. Therefore, the total contribution of the tilings with $i$ dominoes
and $k$ type-$A$ squares is 
\[
\binom{n-i-1}{i}\binom{n-2i-1}{k}\,a^{k}\,b^{n-2i-1-k}\,c^{i}\,k!.
\]
Finally, summing over  $0\leq i\leq \left\lfloor\frac{n-1}{2}\right\rfloor$ and $0\leq k\leq n-2i-1$ gives the result.
\end{proof}

\begin{remark}
The preceding formula can also be obtained analytically. Indeed, the explicit expansion
\begin{equation*}
\Ft{n}(t)
=
\sum_{i=0}^{\left\lfloor (n-1)/2\right\rfloor}
\binom{n-i-1}{i}(at+b)^{n-2i-1}c^i,
\qquad n\geq 1,
\end{equation*}
is well known; see, for example, \cite{FlorezJC}. Expanding
$(at+b)^{n-2i-1}$ by the binomial theorem and using $
\mathcal{L}\{t^k\}(s)=k!/s^{k+1}$,  we obtain
\[
e_n(s)
=
\sum_{i=0}^{\left\lfloor (n-1)/2\right\rfloor}
\sum_{k=0}^{n-2i-1}
\binom{n-i-1}{i}
\binom{n-2i-1}{k}
a^k b^{n-2i-1-k}c^i
\frac{k!}{s^{k+1}}.
\]
Setting $s=1$ recovers the formula in
Theorem~\ref{en-cerrada1}. When $b=0$, only the term $k=n-2i-1$ remains, and hence
\begin{equation*}
e_n(s)
=
\sum_{i=0}^{\left\lfloor (n-1)/2\right\rfloor}
\binom{n-i-1}{i}
c^i
\frac{(n-2i-1)!a^{n-2i-1}}{s^{n-2i}}.
\end{equation*}
\end{remark}

We next ignore the colors and keep track only of the number of type-$A$ squares. The resulting polynomials will be used to derive an addition formula for the FiboLaplace sequence in the specialization $s=1$.

Let $u_{n,k}$ be the total weight of the uncolored tilings of a
$1\times n$ board containing exactly $k$ type-$A$ squares, where
type-$A$ squares, type-$B$ squares, and dominoes have weights $a$,
$b$, and $c$, respectively.

Define
\[
U_n(y):=\sum_{k\ge 0} u_{n,k} y^k,
\]
so that $y$ records the number of type-$A$ squares. For example, forgetting the colors in Figure~\ref{FigTil2} gives
\[
U_3(y)=a^3y^3 + 3a^2by^2 + (3ab^2+2ac)y + b^3 + 2bc.
\]

\begin{theorem}\label{teor:addition-formula}
For all $n,m\ge 1$,
\begin{equation*}
U_{n+m}(y)=U_n(y)\,U_m(y)+c\,U_{n-1}(y)\,U_{m-1}(y).
\end{equation*}
Moreover, after setting $s=1$,
\[
e_{n+m+1}
=
\sum_{i,j\geq 0}
(i+j)!\,u_{n,i}u_{m,j}
+
c\sum_{i,j\geq 0}
(i+j)!\,u_{n-1,i}u_{m-1,j}.
\]
\end{theorem}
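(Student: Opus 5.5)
We prove the polynomial identity with the standard ``break or no break'' argument for tilings, and then obtain the numerical identity by restoring the colors.

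\emph{The identity for $U_{n+m}$.} First record that $U_n(y)$ is the total weight of uncolored tilings of a $1\times n$ board in which type-$A$ squares carry weight $ay$, type-$B$ squares weight $b$, and dominoes weight $c$. We use the convention $U_0(y)=1$ for the empty tiling. Now fix $n,m\ge1$ and consider a tiling of a $1\times(n+m)$ board. Look at the grid line between cells $n$ and $n+1$; there are two cases.
\begin{enumerate}
\item The tiling is breakable at this line. It then splits uniquely into a tiling of length $n$ followed by a tiling of length $m$. Weights and numbers of type-$A$ squares are both additive under concatenation, so these tilings contribute $U_n(y)U_m(y)$.
\item A domino covers cells $n$ and $n+1$. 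Removing it leaves a tiling of length $n-1$ and a tiling of length $m-1$, and this correspondence is bijective. These tilings contribute $c\,U_{n-1}(y)U_{m-1}(y)$.
\end{enumerate}
The condition $n,m\ge1$ ensures that $n-1,m-1\ge0$, so the second case is well defined.

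\emph{Restoring the colors.} Let $\Lambda$ be the linear functional on polynomials in $y$ defined by $y^k\mapsto k!$. A colored tiling with $k$ type-$A$ squares arises from its uncolored version in exactly $k!$ ways, so $w_N=\Lambda(U_N(y))$. Combining this with Theorem~\ref{thm:en=wn} gives
\[
e_{N+1}=\Lambda\bigl(U_N(y)\bigr)=\sum_k k!\,u_{N,k}.
\]
Take $N=n+m$, apply $\Lambda$ to both sides of the identity just proved, and use linearity. The product expands as
\[
U_n(y)U_m(y)=\sum_{i,j}u_{n,i}u_{m,j}\,y^{i+j},
\]
and $\Lambda$ sends $y^{i+j}$ to $(i+j)!$. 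The same expansion applies to $U_{n-1}(y)U_{m-1}(y)$. This yields the stated formula for $e_{n+m+1}$.

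The only point needing care is that $\Lambda$ is not multiplicative. The second formula therefore has to be written with $(i+j)!$ rather than as a product of two values $e$. This is exactly why the statement is phrased using the refined coefficients $u_{n,i}$. A combinatorial reading of the same fact: after splitting, the colors $1,\dots,i+j$ are distributed across both halves in $(i+j)!$ ways. With this in mind there is no real obstacle.
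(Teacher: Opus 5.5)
Your proof is correct and is essentially the paper's argument. You use the same cut between cells $n$ and $n+1$ (no domino across it versus a domino across it) for the polynomial identity, and the same $(i+j)!$ recoloring count for the colored identity, which you package as the linear functional $y^k\mapsto k!$ applied to the polynomial identity instead of re-running the decomposition on colored tilings. One small wording slip: under concatenation the weights are multiplicative, not additive (only the number of type-$A$ squares is additive), though your conclusion $U_n(y)U_m(y)$ is right.
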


\begin{proof}
Cut a $1\times(n+m)$ board between cells $n$ and $n+1$. If no domino crosses the cut, the tiling is uniquely obtained by concatenating a tiling of length $n$ with one of length $m$. These
tilings have total weight  $U_n(y)U_m(y)$. 

If a domino crosses the cut, remove it. The remaining cells form a
board of length $n-1$ to the left of the cut and a board of length
$m-1$ to the right. Since the removed domino has weight $c$, these
tilings have total weight $c\,U_{n-1}(y)U_{m-1}(y)$. The two cases prove the first identity.

We use the same decomposition for colored tilings. In the first case,
suppose that the left and right tilings contain $i$ and $j$ type-$A$ squares, respectively. Their uncolored total weight is $u_{n,i}u_{m,j}$. After concatenation, the resulting tiling has $i+j$ type-$A$ squares, which can be colored in $(i+j)!$ ways.
Thus this case contributes
\[
\sum_{i,j\geq 0}(i+j)!\,u_{n,i}u_{m,j}.
\]
If a domino crosses the cut, the same argument gives
\[
c\sum_{i,j\geq 0}
(i+j)!\,u_{n-1,i}u_{m-1,j}.
\]
Hence the right-hand side is the total weight $w_{n+m}$ of the colored
tilings of a $1\times(n+m)$ board.  By Theorem~\ref{thm:en=wn}, after setting $s=1$,
$w_{n+m}=e_{n+m+1}$, which proves the second identity.
\end{proof}

The first identity is a weighted version of the classical Fibonacci addition formula. Indeed, since a unit square has total weight $ay+b$, specializing  $ay+b=1$ and $c=1$ gives $U_n(y)=F_{n+1}$, and the
identity becomes
\[
F_{n+m+1}=F_{n+1}F_{m+1}+F_nF_m.
\]
The second identity is its colored analogue for the FiboLaplace sequence.

\subsection{The FiboLaplace triangle}

We now refine the tiling interpretation of the FiboLaplace sequence by recording the number of dominoes. This refinement leads naturally to a triangular array whose row sums recover the FiboLaplace sequence.

For $n\geq 0$ and $0\leq k\leq \lfloor n/2\rfloor$, let $\mathcal C_n^{(k)}$ denote the set of tilings in $\mathcal C_n$ with
exactly $k$ dominoes, and define
\[
h(n,k)=\sum_{T\in\mathcal C_n^{(k)}}\wt(T).
\]
Thus $h(n,k)$ is the total weight of the colored tilings of a $1\times n$ board having exactly $k$ dominoes. For example, the tilings
in Figure~\ref{FigTil2} give  $
h(3,1)=2ac+2bc$.

The numbers $h(n,k)$ form a triangular array, which we call the
\emph{FiboLaplace triangle}. By Theorem~\ref{thm:en=wn},
\[
e_{n+1}=w_n=\sum_{k=0}^{\lfloor n/2\rfloor}h(n,k).
\]
Thus, the row sums of the FiboLaplace triangle recover the FiboLaplace sequence.

\begin{theorem}\label{thm:h-n-k}
Let $n\geq 0$ and $0\leq k\leq \lfloor n/2\rfloor$. Then
\[
h(n,k)
=
\binom{n-k}{k}c^k
\sum_{r=0}^{n-2k}
\binom{n-2k}{r}r!a^r b^{n-2k-r}.
\]
In particular, $h(0,0)=1$.
\end{theorem}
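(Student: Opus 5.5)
The argument is a direct count of the colored tilings in $\mathcal C_n^{(k)}$, organized as in the proof of Theorem~\ref{en-cerrada1} but with the number of dominoes fixed at $k$. We first choose the underlying shape, then choose which squares are of type $A$, and finally assign colors.

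\emph{Placing the dominoes.} A tiling of a $1\times n$ board with exactly $k$ dominoes uses $n-2k$ unit squares. Treating each domino as a single object, it is a word of length $(n-2k)+k=n-k$ with $k$ letters marked as dominoes. Hence there are $\binom{n-k}{k}$ such shapes, and the dominoes contribute the factor $c^k$ to each.

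\emph{Typing and coloring the squares.} Among the $n-2k$ squares, choose $r$ to be of type $A$, where $0\le r\le n-2k$; this can be done in $\binom{n-2k}{r}$ ways. The remaining $n-2k-r$ squares are of type $B$. Following the definition of $\mathcal C_n$, the $r$ type-$A$ squares are colored bijectively with the colors $1,\dots,r$, which gives $r!$ colorings. Each resulting colored tiling has weight $a^r b^{n-2k-r}c^k$.

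\emph{Summing.} Every element of $\mathcal C_n^{(k)}$ arises exactly once from a choice of shape, of $r$ and the type-$A$ positions, and of a coloring. This is because a colored tiling determines its domino positions, its set of type-$A$ squares, and its color assignment. Summing over $r$ and factoring out $\binom{n-k}{k}c^k$ gives the stated formula. For $n=k=0$ the only tiling is the empty one, and the formula reduces to $1\cdot 1\cdot 0!=1$, so $h(0,0)=1$.

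There is no real obstacle here. The only point needing care is to justify that this decomposition is a bijection, so that nothing is overcounted, and to check that the color factor $r!$ depends only on $r$ and not on where the type-$A$ squares sit. As a consistency check, summing the formula over $k$ with $i=k$ and $n$ replaced by $n-1$ reproduces Theorem~\ref{en-cerrada1}.
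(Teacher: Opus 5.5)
Your proposal is correct and follows essentially the same argument as the paper: count the $\binom{n-k}{k}$ placements of the dominoes, choose the $r$ type-$A$ squares in $\binom{n-2k}{r}$ ways, color them in $r!$ ways, and sum over $r$. Your explicit bijectivity remark and the consistency check against Theorem~\ref{en-cerrada1} are sound additions.
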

\begin{proof}
Consider a tiling of a $1\times n$ board with $k$ dominoes and  $m=n-2k$  unit squares. Since each domino covers two cells, the tiling
has $n-k$ tiles in all. Collapsing each domino to a single tile, we
obtain a sequence of $n-k$ tiles, exactly $k$ of which are dominoes.
Thus, there are  $\binom{n-k}{k}$ possible underlying tilings.

Now fix one such tiling and let $r$ be the number of unit squares of type $A$.  There are $\binom{m}{r}$ ways to choose these squares, while
the remaining $m-r$ squares are of type $B$. Their combined weight is 
$a^r b^{m-r}$.  The $r$ squares of type $A$ can be assigned distinct colors in $r!$
ways, and the $k$ dominoes contribute the factor $c^k$. Hence the total weight of the tilings with exactly $r$ squares of type $A$ is
\[
\binom{n-k}{k}\binom{m}{r}c^k a^r b^{m-r} r!.
\]
Summing over all possible values of $r$ gives
\[
h(n,k)
=\binom{n-k}{k}\,c^k\sum_{r=0}^{m}\binom{m}{r}r!a^r b^{m-r}.
\]
Substituting $m=n-2k$ proves the result. 
\end{proof}

The first few rows of the FiboLaplace triangle are
\tiny
\[
\begin{array}{c|ccccc}
n\backslash k & 0 & 1 & 2 & 3 \\
\hline
0 & 1 \\
1 & a+b \\
2 & 2a^2+2ab+b^2 & c \\
3 & 6a^3+6a^2b+3ab^2+b^3 & 2c(a+b) \\
4 & 24a^4+24a^3b+12a^2b^2+4ab^3+b^4 & 3c(2a^2+2ab+b^2) & c^2 \\
5 & 120a^5+120a^4b+60a^3b^2+20a^2b^3+5ab^4+b^5
  & 4c(6a^3+6a^2b+3ab^2+b^3) & 3c^2(a+b) \\
6 & 720a^6+720a^5b+360a^4b^2+120a^3b^3+30a^2b^4+6ab^5+b^6
  & 5c(24a^4+24a^3b+12a^2b^2+4ab^3+b^4)
  & 6c^2(2a^2+2ab+b^2) & c^3
\end{array}
\]
\normalsize

Several boundary entries follow immediately from
Theorem~\ref{thm:h-n-k}. For every $n\geq 0$,
\[
h(n,0)
=
\sum_{r=0}^{n}\binom{n}{r}r!a^r b^{n-r},
\]
while, for every $k\geq 0$, $h(2k,k)=c^k$
 and $h(2k+1,k)=(k+1)c^k(a+b)$.

When $b=0$, the sum in Theorem~\ref{thm:h-n-k} reduces to a single
term, giving the following simple formula.

\begin{corollary}\label{cor:b0}
Let $n\geq 0$ and $0\leq k\leq \lfloor n/2\rfloor$. If $b=0$, then
\[h(n,k)
=
\binom{n-k}{k}c^k(n-2k)!a^{n-2k}.\]
\end{corollary}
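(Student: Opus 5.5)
This is a direct specialization of Theorem~\ref{thm:h-n-k}, so the plan is to substitute $b=0$ into its formula and simplify the inner sum. We write $m=n-2k\ge 0$. Theorem~\ref{thm:h-n-k} gives
\[
h(n,k)=\binom{n-k}{k}c^k\sum_{r=0}^{m}\binom{m}{r}r!\,a^r b^{m-r}.
\]
When $b=0$, every factor $b^{m-r}$ with $m-r\ge 1$ vanishes. Only the term $r=m$ survives, and there we use the convention $b^0=1$, which is the convention under which the tiling weights are defined (an empty product of type-$B$ weights equals $1$). That term equals $\binom{m}{m}m!\,a^m=(n-2k)!\,a^{n-2k}$, and the claimed formula follows.

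For robustness, I would also record the combinatorial reading, which avoids any worry about $0^0$. If $b=0$, every tiling containing a type-$B$ square has weight $0$. So $h(n,k)$ is the total weight of the colored tilings in which all $n-2k$ unit squares are of type $A$. Collapsing each domino to a single tile, there are $\binom{n-k}{k}$ ways to place the $k$ dominoes among the $n-k$ tiles, and the squares can be colored in $(n-2k)!$ ways. Each such tiling has weight $a^{n-2k}c^k$.

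The boundary case $n=2k$ is consistent with this: there are no squares, and the formula gives $c^k$, matching $h(2k,k)=c^k$ noted above. There is no real obstacle here; the only point needing care is the convention $0^0=1$ in the surviving term $r=m$, and the combinatorial reading settles it.
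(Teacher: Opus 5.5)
Your proof is correct and follows the paper's route: the paper also obtains the corollary by setting $b=0$ in Theorem~\ref{thm:h-n-k}, where only the $r=n-2k$ term of the inner sum survives. Your added combinatorial reading, in which only tilings whose squares are all of type $A$ contribute, is a valid sanity check that also justifies the $b^0=1$ convention.
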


The explicit formula above gives each entry of the FiboLaplace triangle directly. The tiling model also yields a recurrence that constructs the triangle row by row. 

\begin{theorem}\label{thm:rec-h}
For $n\geq 1$ and $0\leq k\leq \lfloor n/2\rfloor$, the sequence $h(n,k)$ satisfies
\begin{equation*}
h(n,k)
=(a+b)\,h(n-1,k)+a^{2}\,\frac{\partial}{\partial a}h(n-1,k)+c\,h(n-2,k-1),
\end{equation*}
with initial conditions $h(0,0)=1$ and $
h(n,k)=0$ whenever $n<0$, $k<0$, or $2k>n$.
\end{theorem}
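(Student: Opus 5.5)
The plan is to condition on the last tile of a colored tiling in $\mathcal C_n^{(k)}$, exactly as in the proof of Proposition~\ref{prop:Fn-tilings}. Throughout, $h(n,k)$ is treated as a polynomial in the formal variables $a,b,c$, so that $\partial/\partial a$ makes sense. For each $r$, write $h_r(n,k)$ for the total weight of the tilings in $\mathcal C_n^{(k)}$ with exactly $r$ type-$A$ squares. By the proof of Theorem~\ref{thm:h-n-k},
\[
h_r(n,k)=\binom{n-k}{k}\binom{n-2k}{r}c^k b^{n-2k-r}\,r!\,a^r,
\]
which is $a^r$ times a coefficient independent of $a$. Consequently
\[
a^2\frac{\partial}{\partial a}h(n-1,k)=\sum_r r\,a\,h_r(n-1,k).
\]

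\emph{Last tile a domino.} Removing it gives a bijection onto $\mathcal C_{n-2}^{(k-1)}$. The colors are unchanged and the weight drops by the factor $c$. This case contributes $c\,h(n-2,k-1)$.

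\emph{Last tile a type-$B$ square.} Removing it gives a bijection onto $\mathcal C_{n-1}^{(k)}$, again preserving colors. This case contributes $b\,h(n-1,k)$.

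\emph{Last tile a type-$A$ square.} This is the only step needing care. Let $r$ be the number of type-$A$ squares among the first $n-1$ cells, and let $j\in\{1,\dots,r+1\}$ be the color of the last square. Delete the last square. Relabel the remaining colors order-preservingly onto $\{1,\dots,r\}$: colors $>j$ decrease by one. The result is a tiling in $\mathcal C_{n-1}^{(k)}$ with $r$ type-$A$ squares. Conversely, from such a tiling and a choice of $j$, we recover the original uniquely by shifting up the colors $\ge j$ and appending a type-$A$ square of color $j$. Hence this case contributes
\[
\sum_r (r+1)\,a\,h_r(n-1,k)=a\,h(n-1,k)+a^2\frac{\partial}{\partial a}h(n-1,k).
\]

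Summing the three cases gives the stated recurrence. As a consistency check, the same identity can be verified algebraically from Theorem~\ref{thm:h-n-k}. It reduces to
\[
\binom{n-k}{k}=\binom{n-1-k}{k}+\binom{n-1-k}{k-1}
\]
together with $\binom{m}{r}r!+\binom{m}{r-1}(r-1)!\,r=\binom{m+1}{r}r!$, where $m=n-1-2k$.

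Finally, I check the boundary conventions. For $k=0$ the domino term vanishes, since $h(n-2,-1)=0$. For $n=1$ we have $h(-1,k)=0$. For $n=2k$ we have $h(n-1,k)=0$, since $2k>n-1$, and the formula correctly returns $c\,h(2k-2,k-1)=c^k$. These conventions match the combinatorial fact that the corresponding sets of tilings are empty. The initial value $h(0,0)=1$ comes from the empty tiling.
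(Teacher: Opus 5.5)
Your proof is correct and follows the paper's argument. You condition on the last tile: the type-$B$ and domino cases give $b\,h(n-1,k)$ and $c\,h(n-2,k-1)$, and the type-$A$ case gives $\sum_r (r+1)a\,h_r(n-1,k)=a\,h(n-1,k)+a^2\frac{\partial}{\partial a}h(n-1,k)$, because $a\,\partial/\partial a$ marks the number of type-$A$ squares. Your order-preserving recoloring makes explicit the $r+1$ extensions that the paper only asserts, and your algebraic cross-check via Theorem~\ref{thm:h-n-k} and Pascal's rule is a valid independent confirmation.
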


\begin{proof}
We partition the tilings in $\mathcal C_n^{(k)}$ according to their
last tile. If the last tile is a square of type $B$, then deleting it leaves a tiling in $\mathcal C_{n-1}^{(k)}$. This case contributes $b\,h(n-1,k)$. If the last tile is a domino, then deleting it leaves a tiling in $\mathcal C_{n-2}^{(k-1)}$ and the contribution is
$c\,h(n-2,k-1)$.

It remains to consider tilings ending in a square of type $A$.  Let $T'\in \mathcal C_{n-1}^{(k)}$, and suppose that $T'$ has $r=r(T')$ squares of type $A$. There are $r+1$ ways to extend $T'$ by appending
and coloring a new square of type $A$.  Since the new square contributes a
factor of $a$, the total weight of these extensions is  $a(r+1)\,\wt(T')$.

Summing over all $T'\in \mathcal C_{n-1}^{(k)}$, we obtain
\[
a\sum_{T'}(r(T')+1)\wt(T')
=
a\,h(n-1,k)
+a\sum_{T'}r(T')\wt(T').
\]
Since differentiation with respect to $a$ marks the number of squares
of type $A$, we have
\[
a\frac{\partial}{\partial a}h(n-1,k)
=
\sum_{T'}r(T')\wt(T').
\]
Hence
\[
a\sum_{T'} r(T')\,\wt(T')
=
a^2\frac{\partial}{\partial a}h(n-1,k).
\]
Thus the contribution of tilings ending in a type-$A$ square is
\[
a\,h(n-1,k)+a^2\frac{\partial}{\partial a}h(n-1,k).
\]
Adding the contributions from the three possible last tiles gives the desired result. The boundary conditions follow directly from the definition.
\end{proof}

\section{Continued-Fraction Expressions} This section gives two continued-fraction expansions for the generating function $E_{a,b,c}(t)$ of the FiboLaplace sequence $(e_{n+1})_{n\geq 0}$. We begin with Euler's classical continued fraction for the factorial series; see, for example, \cite{Sokal}. As an identity of formal power series,
\begin{equation}\label{fcEuler}
\Phi(t)=\sum_{k\geq 0}k!\,t^k=\cfrac{1}{1-\cfrac{1\cdot t}{1-\cfrac{1\cdot t}{1-\cfrac{2\cdot t}{1-\cfrac{2\cdot t}{1-\cfrac{3\cdot t}{1-\cfrac{3\cdot t}{\ddots}}}}}}}.
\end{equation}

\begin{theorem}\label{thm:continued-fraction}
The generating function of the FiboLaplace sequence has the
continued-fraction expansion
\begin{equation*}
E_{a,b,c}(t)
=
\sum_{n\geq 0}e_{n+1}t^n
=
\cfrac{1}{
1-bt-ct^2-
\cfrac{at}{
1-\cfrac{at}{
1-bt-ct^2-
\cfrac{2at}{
1-\cfrac{2at}{
1-bt-ct^2-
\cfrac{3at}{
1-\cfrac{3at}{\ddots}}}}}}}.
\end{equation*}
\end{theorem}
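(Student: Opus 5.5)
We start from Theorem~\ref{thm:Eabc}, which gives $E_{a,b,c}(t)=G(t)\,\Phi\bigl(atG(t)\bigr)$ with $G(t)=1/(1-bt-ct^2)$. We then substitute $u=atG(t)$ into Euler's continued fraction \eqref{fcEuler}. This substitution is legitimate as an identity of formal power series in $t$, because $u$ has zero constant term, so composition with $\Phi$ and with each convergent is well defined.

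Write $D:=1-bt-ct^2$, so that $G=1/D$ and $u=at/D$. Euler's expansion becomes
\[
E_{a,b,c}(t)=\frac{1}{D}\cdot\cfrac{1}{1-\cfrac{1\cdot at/D}{1-\cfrac{1\cdot at/D}{1-\cfrac{2\cdot at/D}{1-\cdots}}}}.
\]
The key step is an equivalence transformation that clears the denominators $D$ at alternating levels. For the outer level, absorb the prefactor: $\frac{1}{D}\cdot\frac{1}{1-X}=\frac{1}{D-DX}$, where $X=\dfrac{at/D}{1-Y_1}$, so $DX=\dfrac{at}{1-Y_1}$. Here $Y_1=\dfrac{at/D}{1-Z}$, and $Z$ is the tail beginning with $2at/D$. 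Multiplying the numerator and denominator of $Y_1$ by $D$ gives $Y_1=\dfrac{at}{D-DZ}$. Then $DZ$ has exactly the same shape as $DX$ with $1$ replaced by $2$, and so on.

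Inductively, the partial numerators $k\cdot at/D$ in even positions lose their $D$ and the partial denominators in odd positions become $D$. This yields the pattern
\[
D-\frac{at}{1-\dfrac{at}{D-\dfrac{2at}{1-\dfrac{2at}{D-\cdots}}}},
\]
which is exactly the stated fraction.

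To make this rigorous, I would carry out the transformation on the $N$th convergents of \eqref{fcEuler}, which are rational functions. I would prove by induction on the depth that the $N$th convergent of Euler's fraction, evaluated at $u=at/D$ and multiplied by $1/D$, equals the corresponding truncation of the new fraction.

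Convergence is then handled in the formal sense. The $N$th convergent of \eqref{fcEuler} agrees with $\Phi$ modulo $u^{\lfloor N/2\rfloor+1}$ (a standard property of Stieltjes-type fractions). Since $u=O(t)$, the transformed truncations agree with $E_{a,b,c}(t)$ modulo increasing powers of $t$, and hence converge to $E_{a,b,c}(t)$ in the $t$-adic topology.

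I expect the only delicate point to be bookkeeping. One must track that $D$ attaches to the odd-level denominators while the even-level ones remain $1$, and justify the passage to the limit through the convergents rather than by manipulating the infinite expression directly.
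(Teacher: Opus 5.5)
Your proposal is correct and follows essentially the same route as the paper: substitute $at/(1-bt-ct^2)$ into Euler's fraction \eqref{fcEuler}, absorb the prefactor $1/(1-bt-ct^2)$ into the first denominator, and clear that factor at alternate levels, with your convergent-by-convergent induction and $t$-adic limit argument supplying the formal justification the paper leaves implicit. One small wording slip: every partial numerator, not only those in even positions, loses its factor $D$, which is exactly what your displayed pattern shows.
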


\begin{proof}
By Theorem~\ref{thm:Eabc},  $E_{a,b,c}(t) =
G(t)\,\Phi\bigl(atG(t)\bigr)$, where
\[
G(t)=\frac{1}{1-bt-ct^2}.
\]
Set $A(t)=1-bt-ct^2$, so that $G(t)=1/A(t)$. Substituting $t$ by $\frac{at}{A(t)}$ into \eqref{fcEuler} gives
\[
E_{a,b,c}(t)
=
\frac{1}{A(t)}
\cfrac{1}{
1-\cfrac{\frac{at}{A(t)}}{
1-\cfrac{\frac{at}{A(t)}}{
1-\cfrac{\frac{2at}{A(t)}}{
1-\cfrac{\frac{2at}{A(t)}}{
1-\cfrac{\frac{3at}{A(t)}}{
1-\cfrac{\frac{3at}{A(t)}}{\ddots}}}}}}}.
\]
Absorbing the outer factor $1/A(t)$ into the first denominator and
then clearing the factors $A(t)$ recursively at every other level
yields  the desired continued fraction.
\end{proof}

The continued fraction in Theorem~\ref{thm:continued-fraction} can be
contracted to a Jacobi-type continued fraction. Indeed, contracting
Euler's continued fraction in \eqref{fcEuler} gives the classical
expansion
\[
\Phi(t)=\sum_{k\geq 0}k!t^k
=
\cfrac{1}{
1-t-
\cfrac{t^2}{
1-3t-
\cfrac{4t^2}{
1-5t-
\cfrac{9t^2}{
1-7t-\ddots
}
}
}
};
\]
see, for example, \cite{Flajolet}. This yields the following
Jacobi-type continued fraction for the FiboLaplace generating
function.

\begin{theorem}\label{thm:contracted-cf}
The generating function $E_{a,b,c}(t)$ admits
the continued fraction expansion
\[
\cfrac{1}{
1-(a+b)t-ct^2-
\cfrac{a^2t^2}{
1-(3a+b)t-ct^2-
\cfrac{4a^2t^2}{
1-(5a+b)t-ct^2-
\cfrac{9a^2t^2}{
1-(7a+b)t-ct^2-\ddots
}
}
}
}.
\]
\end{theorem}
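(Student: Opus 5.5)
The plan is to use Theorem~\ref{thm:Eabc}, $E_{a,b,c}(t)=G(t)\,\Phi(atG(t))$ with $G(t)=1/A(t)$ and $A(t)=1-bt-ct^2$, together with the contracted (Jacobi) expansion of $\Phi$ quoted just before the statement. Write that J-fraction as
\[
\Phi(x)=\cfrac{1}{1-\beta_0x-\cfrac{\lambda_1x^2}{1-\beta_1x-\cfrac{\lambda_2x^2}{1-\beta_2x-\ddots}}},\qquad \beta_j=2j+1,\quad \lambda_j=j^2 .
\]
Substitute $x=at/A(t)$. Then $\Phi(atG(t))$ becomes the same fraction with each $\beta_jx$ replaced by $(2j+1)at/A(t)$ and each $\lambda_jx^2$ by $j^2a^2t^2/A(t)^2$.

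The key step is an equivalence transformation that clears the powers of $A(t)$. Multiply the numerator and denominator at each level by $A(t)$, working from the top down. For the outermost level,
\[
G(t)\,\Phi(atG(t))=\frac{1}{A(t)}\cdot\frac{1}{1-\frac{at}{A(t)}-\frac{a^2t^2/A(t)^2}{D_1}},
\]
where $D_1$ is the denominator at the next level. Absorbing $1/A(t)$ gives
\[
\frac{1}{A(t)-at-\dfrac{a^2t^2/A(t)}{D_1}}=\frac{1}{A(t)-at-\dfrac{a^2t^2}{A(t)D_1}}.
\]
Next, $A(t)D_1=A(t)-3at-\dfrac{4a^2t^2/A(t)}{D_2}=A(t)-3at-\dfrac{4a^2t^2}{A(t)D_2}$, and so on. Inductively, the $j$th denominator becomes $A(t)-(2j+1)at-\dfrac{(j+1)^2a^2t^2}{A(t)D_{j+1}}$. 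Since $A(t)-(2j+1)at=1-((2j+1)a+b)t-ct^2$, this is exactly the claimed fraction.

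To make this rigorous rather than purely formal, I would argue with convergents (finite truncations). The $N$th truncation of the J-fraction for $\Phi$ agrees with $\Phi$ modulo $x^{N+1}$ (in fact to higher order). Because $atG(t)=O(t)$, the substitution preserves this agreement modulo $t^{N+1}$, and the algebraic manipulation above is an exact identity of rational functions for each finite truncation. A short induction on the depth, applied to the tail statement ``$A\cdot D_j(\text{substituted})$ equals the $j$th tail of the target fraction'', handles all levels uniformly.

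An alternative check is to contract the fraction of Theorem~\ref{thm:continued-fraction} directly via even contraction. This is messier because the partial denominators alternate between $A(t)$ and $1$, so I prefer the substitution route.

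The main obstacle is the bookkeeping of formal convergence: we must make sure the substituted series and the truncations converge $t$-adically. This holds because each level beyond the $j$th contributes only at order $t^{2j}$ or higher.
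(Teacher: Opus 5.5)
Your proposal is correct and follows essentially the paper's argument. The paper gives no separate proof; it obtains this expansion by substituting $x=atG(t)$ into the contracted J-fraction of $\Phi$ and clearing the factors $A(t)$ level by level, as in the proof of Theorem~\ref{thm:continued-fraction}. Your tail identity $A(t)D_j=A(t)-(2j+1)at-\frac{(j+1)^2a^2t^2}{A(t)D_{j+1}}$ and the convergent-based $t$-adic justification make rigorous what the paper leaves implicit.
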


We next give a lattice-path interpretation of
Theorem~\ref{thm:contracted-cf}. Let $\mathcal{GM}_n$ denote the set of
paths from $(0,0)$ to $(n,0)$ that never pass below the horizontal axis
and use the steps
\[
U=(1,1),\qquad D=(1,-1),\qquad H=(1,0),\qquad H_2=(2,0).
\]
Thus, $\mathcal{GM}_n$ consists of Motzkin paths whose horizontal steps
may have length one or two. We assign weight $1$ to each up-step, weight
$(j+1)^2a^2$ to a down-step from height $j+1$ to height $j$, weight
$b+(2j+1)a$ to a step $H$ at height $j$, and weight $c$ to each step
$H_2$. The weight $\wt(P)$ of a path $P$ is the product of the weights
of its steps. For lattice-path interpretations of the Fibonacci numbers
and related generalizations, see \cite{CheonKimShapiro,RamirezSirventkBonacci}.

\begin{theorem}\label{thm:motzkin-interpretation}
For every $n\geq 0$,
\[
e_{n+1}
=
\sum_{P\in\mathcal{GM}_n}\wt(P).
\]
Equivalently, the FiboLaplace number $e_{n+1}$ is the total weight of
the generalized Motzkin paths of  length $n$.
\end{theorem}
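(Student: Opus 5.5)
The plan is to combine Theorem~\ref{thm:contracted-cf} with Flajolet's combinatorial theory of continued fractions, in the form that allows horizontal steps of length two.

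\emph{Step 1 (generating function of paths).} Define
\[
M(t):=\sum_{n\ge 0}\Bigl(\sum_{P\in\mathcal{GM}_n}\wt(P)\Bigr)t^n .
\]
For $j\ge 0$, let $M_j(t)$ be the weighted generating function of paths that start and end at height $j$ and never go below height $j$. Within such a path, every step taken at height $j$ is inherited with the same weight it has in the original path. Hence $M_0=M$.

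\emph{Step 2 (first-return decomposition).} Decompose a nonempty path counted by $M_j$ according to its first step. The first step is one of the following.
\begin{itemize}
\item A step $H$ at height $j$, with weight $(b+(2j+1)a)t$, followed by a path counted by $M_j$.
\item A step $H_2$, with weight $ct^2$, followed by a path counted by $M_j$.
\item An up-step $U$, with weight $t$. It is followed by a path counted by $M_{j+1}$, then the first down-step returning to height $j$, with weight $(j+1)^2a^2t$, then a path counted by $M_j$.
\end{itemize}
This gives
\[
M_j=1+\bigl((2j+1)a+b\bigr)tM_j+ct^2M_j+(j+1)^2a^2t^2M_{j+1}M_j,
\]
that is,
\[
M_j=\frac{1}{1-((2j+1)a+b)t-ct^2-(j+1)^2a^2t^2M_{j+1}}.
\]

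\emph{Step 3 (iterate).} Iterating from $j=0$ produces exactly the continued fraction of Theorem~\ref{thm:contracted-cf}. That continued fraction equals $E_{a,b,c}(t)=\sum_{n\ge0}e_{n+1}t^n$. Comparing coefficients of $t^n$ then gives the claim.

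The main obstacle is justifying that the infinite iteration is legitimate as an identity of formal power series. Consider the $N$-th truncation, in which $M_N$ is replaced by $0$; it counts exactly the paths of height less than $N$. Since each up-step and each down-step has weight divisible by $t$, a path of length $n$ has height at most $n/2$. So for $N>n/2$ the coefficient of $t^n$ in the truncation agrees with the coefficient of $t^n$ in $M$.

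The same statement must hold for the convergents of the J-fraction in Theorem~\ref{thm:contracted-cf}, and this is standard: the $N$-th convergent agrees with the limit modulo $t^{2N}$, because the $N$-th partial numerator carries the factor $t^2$ at each of its $N$ levels. Therefore $M=E_{a,b,c}$ coefficientwise.

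Note that the $H_2$ steps contribute the extra term $-ct^2$ at every level. This is exactly why this model requires generalized Motzkin paths rather than classical ones.
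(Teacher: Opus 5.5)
Your proposal is correct and follows the paper's proof essentially verbatim. You use the same first-step decomposition of $M_j$, giving $M_j=1/\bigl(1-((2j+1)a+b)t-ct^2-(j+1)^2a^2t^2M_{j+1}\bigr)$, then iterate from $j=0$ and identify the result with Theorem~\ref{thm:contracted-cf}. Your truncation argument (a path of length $n$ has height at most $n/2$, so the convergents stabilize coefficientwise) adds a formal-power-series justification of the infinite iteration that the paper leaves implicit.
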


\begin{proof}
For $j\geq 0$, let $M_j(t)$ be the generating function for generalized
Motzkin paths that begin and end at height $j$ and never pass below
that height, where $t$ records length.

A nonempty path
counted by $M_j(t)$ begins with either a step $H$, a step $H_2$, or an
up-step. In the first case, the initial step has weight
$b+(2j+1)a$ and is followed by another path counted by $M_j(t)$. In
the second case, the initial step has weight $c$ and  length
two, and is again followed by a path counted by $M_j(t)$. In the last
case, the up-step is followed by a path counted by $M_{j+1}(t)$, a
down-step from height $j+1$ to height $j$ of weight $(j+1)^2a^2$, and
then another path counted by $M_j(t)$.

Therefore,
\[
M_j(t)
=
1+\bigl(b+(2j+1)a\bigr)tM_j(t)
+ct^2M_j(t)
+(j+1)^2a^2t^2M_{j+1}(t)M_j(t).
\]
Solving for $M_j(t)$ gives
\[
M_j(t)
=
\frac{1}{
1-\bigl(b+(2j+1)a\bigr)t-ct^2
-(j+1)^2a^2t^2M_{j+1}(t)
}.
\]
Iterating this identity, beginning with $j=0$, yields
\[
M_0(t)
=
\cfrac{1}{
1-(a+b)t-ct^2-
\cfrac{a^2t^2}{
1-(3a+b)t-ct^2-
\cfrac{4a^2t^2}{
1-(5a+b)t-ct^2-
\cfrac{9a^2t^2}{
1-(7a+b)t-ct^2-\ddots
}
}
}
}.
\]
By Theorem~\ref{thm:contracted-cf}, this continued fraction is
$E_{a,b,c}(t)$. Hence
\[
M_0(t)
=
E_{a,b,c}(t)
=
\sum_{n\geq 0}e_{n+1}t^n.
\]
Comparing coefficients of $t^n$ completes the proof.
\end{proof}

For example, for $n=3$, the paths in $\mathcal{GM}_3$ are shown in Figure~\ref{exMotzkin}. 

\begin{figure}[ht!]
\centering
\begin{tikzpicture}[
    x=0.9cm, y=0.9cm,
    grid/.style={draw=gray!40, line width=0.35pt},
    level/.style={draw=gray!70, line width=0.35pt},
    path/.style={blue!70!black, line width=1.6pt, line cap=round, line join=round},
    vertex/.style={circle, fill=blue!70!black, inner sep=1.4pt},
    lab/.style={font=\small}
]


\begin{scope}[shift={(0,0)}]
    \draw[grid] (0,0) -- (3,0);
    \draw[level] (0,1) -- (3,1);
    \foreach \x in {0,1,2,3}
        \draw[grid] (\x,0) -- (\x,1);
    \draw[path] (0,0) -- (1,0) -- (2,0) -- (3,0);
    \foreach \P in {(0,0),(1,0),(2,0),(3,0)}
        \node[vertex] at \P {};
    \node[lab] at (1.5,-0.45) {$HHH$};
\end{scope}

\begin{scope}[shift={(5,0)}]
    \draw[grid] (0,0) -- (3,0);
    \draw[level] (0,1) -- (3,1);
    \foreach \x in {0,1,2,3}
        \draw[grid] (\x,0) -- (\x,1);
    \draw[path] (0,0) -- (1,0) -- (3,0);
    \foreach \P in {(0,0),(1,0),(3,0)}
        \node[vertex] at \P {};
    \node[lab] at (1.5,-0.45) {$HH_2$};
\end{scope}

\begin{scope}[shift={(10,0)}]
    \draw[grid] (0,0) -- (3,0);
    \draw[level] (0,1) -- (3,1);
    \foreach \x in {0,1,2,3}
        \draw[grid] (\x,0) -- (\x,1);
    \draw[path] (0,0) -- (2,0) -- (3,0);
    \foreach \P in {(0,0),(2,0),(3,0)}
        \node[vertex] at \P {};
    \node[lab] at (1.5,-0.45) {$H_2H$};
\end{scope}


\begin{scope}[shift={(0,-3)}]
    \draw[grid] (0,0) -- (3,0);
    \draw[level] (0,1) -- (3,1);
    \foreach \x in {0,1,2,3}
        \draw[grid] (\x,0) -- (\x,1);
    \draw[path] (0,0) -- (1,0) -- (2,1) -- (3,0);
    \foreach \P in {(0,0),(1,0),(2,1),(3,0)}
        \node[vertex] at \P {};
    \node[lab] at (1.5,-0.45) {$HUD$};
\end{scope}

\begin{scope}[shift={(5,-3)}]
    \draw[grid] (0,0) -- (3,0);
    \draw[level] (0,1) -- (3,1);
    \foreach \x in {0,1,2,3}
        \draw[grid] (\x,0) -- (\x,1);
    \draw[path] (0,0) -- (1,1) -- (2,0) -- (3,0);
    \foreach \P in {(0,0),(1,1),(2,0),(3,0)}
        \node[vertex] at \P {};
    \node[lab] at (1.5,-0.45) {$UDH$};
\end{scope}

\begin{scope}[shift={(10,-3)}]
    \draw[grid] (0,0) -- (3,0);
    \draw[level] (0,1) -- (3,1);
    \foreach \x in {0,1,2,3}
        \draw[grid] (\x,0) -- (\x,1);
    \draw[path] (0,0) -- (1,1) -- (2,1) -- (3,0);
    \foreach \P in {(0,0),(1,1),(2,1),(3,0)}
        \node[vertex] at \P {};
    \node[lab] at (1.5,-0.45) {$UHD$};
\end{scope}

\end{tikzpicture}
\caption{The six paths in $\mathcal{GM}_3$.}
\label{exMotzkin}
\end{figure}
Their respective weights are
\[
(a+b)^3,\qquad c(a+b),\qquad c(a+b),\qquad
a^2(a+b),\qquad a^2(a+b),\qquad a^2(3a+b).
\]
Hence
\[
\begin{aligned}
\sum_{P\in\mathcal{GM}_3}\wt(P)
&=(a+b)^3+2c(a+b)+2a^2(a+b)+a^2(3a+b)\\
&=6a^3+6a^2b+3ab^2+b^3+2ac+2bc\\
&=e_4.
\end{aligned}
\]
This agrees with the value obtained from the colored-tiling
interpretation in Figure~\ref{FigTil2}.

Thus, the FiboLaplace sequence admits two different combinatorial interpretations, one in terms of colored tilings and the other in terms of generalized Motzkin paths. Finding a direct weight-preserving bijection between these two families remains an interesting open problem.

In a forthcoming paper, we will study arithmetic aspects of the FiboLaplace sequence, including divisibility properties, congruences, and $p$-adic
valuations.

\section*{Funding}
The first author was partially supported
by the Citadel Foundation. The second author was supported by UPTC. The
last author was partially supported by Universidad Nacional de Colombia, Project No. 64041. 
\vspace{-0.3cm}
\section*{Contributions}
All authors contributed equally to the manuscript and read and approved the final manuscript.
\vspace{-0.3cm}

\section*{Conflict of interest}
This study does not have any conflicts to disclose.

\end{document}